\newtheorem{theorem}{Theorem}[section]
\newtheorem{proposition}[theorem]{Proposition}
\newtheorem{lemma}[theorem]{Lemma}
\newtheorem{corollary}[theorem]{Corollary}
\newtheorem{question}[theorem]{Problem}
\theoremstyle{definition}
\newtheorem{definition}[theorem]{Definition}
\theoremstyle{remark}
\newtheorem{remark}[theorem]{Remark}
\numberwithin{equation}{section}
\newcommand{\R}{\mathbb R}
\newcommand{\Z}{{\mathbb Z}}
\newcommand{\N}{{\mathbb N}}
\newcommand{\C}{{\mathbb C}}
\newcommand{\Q}{{\mathbb Q}}
\title[Spherical designs for quaternions and modular forms]{Spherical designs for finite quaternionic unit groups and their applications to modular forms}
\author[M.~Hirao]{Masatake Hirao}
\address[M.~Hirao]{Department of Information Science and Technology\\
	Aichi Prefectural University\\
	Nagakute-city, Aichi, 480-1198\\
	Japan}
\email{hirao@ist.aichi-pu.ac.jp}
\author[H.~Nozaki]{Hiroshi Nozaki}
\address[H.~Nozaki]{Department of Mathematics Education\\ 
	Aichi University of Education\\
	1 Hirosawa, Igaya-cho, Kariya, Aichi 448-8542\\
	Japan}
\email{hnozaki@auecc.aichi-edu.ac.jp}
\author[K.~Tasaka]{Koji Tasaka}
\address[K.~Tasaka]{Department of Mathematics\\
	Kindai University\\
	3-4-1, Kowakae, Higashiosaka, Osaka 577-8502\\
	Japan}
\email{tasaka@math.kindai.ac.jp}
\subjclass{05B30, 11P21, 11F30}
\keywords{Finite quaternionic unit group, spherical designs of harmonic index, harmonic strength, Linear programming bound, uniqueness, spherical theta functions, modular forms}
\begin{document}

\begin{abstract}
For a finite subset $X$ of the $d$-dimensional unit sphere, the harmonic strength $T(X)$ of $X$ is the set of $\ell\in \N$ such that $\sum_{x\in X} P(x)=0$ for all harmonic polynomials $P$ of homogeneous degree $\ell$.
We will study three exceptional finite groups of unit quaternions, called the binary tetrahedral group $2T$ of order 24, the octahedral group $2O$ of order 48, and the icosahedral group $2I$ of order 120, which can be viewed as a subset of the 3-dimensional unit sphere.
For these three groups, we determine the harmonic strength and show the minimality and the uniqueness as spherical designs. 
In particular, the group $2O$ is unique as a minimal subset $X$ of the 3-dimensional unit sphere with $T(X)=\{22,14,10,6,4,2 \}\cup \mathbb{O}^+$, where $\mathbb{O}^+$ denotes the set of all positive odd integers.
This result provides the first characterization of $2O$ from the spherical design viewpoint.

For $G\in \{2T,2O,2I\}$, we consider the lattice $\mathcal{O}_{G}$ generated by $G$ over $R_G$ on which the group $G$ acts on by multiplication, where $R_{2T}=\Z,\ R_{2O}=\Z[\sqrt{2}],\ R_{2I}=\Z[(1+\sqrt{5})/2]$ are the ring of integers. 
We introduce the spherical theta function $\theta_{G,P}(z)$ attached to the lattice $\mathcal{O}_G$ and a harmonic polynomial $P$ of degree $\ell$ and prove that they are modular forms. 
By applying our results on the characterization of $G$ as a spherical design, we determine the cases in which the $\C$-vector space spanned by all $\theta_{G,P}(z)$ of harmonic polynomials $P$ of homogeneous degree $\ell$ has dimension zero--without relying on the theory of modular forms.
\end{abstract}

\maketitle

\section{Introduction}

\subsection{Background}
A finite non-empty subset $X$ of the unit sphere $\mathbb{S}^{d-1}$ of the $d$-dimensional Euclidean space $ \R^d$ is a \emph{spherical $t$-design} if it holds that
\begin{equation*}\label{eq:spherical_design}
\frac{1}{|X|} \sum_{\boldsymbol{x}\in X} P(\boldsymbol{x}) = \frac{1}{|\mathbb{S}^{d-1}|} \int_{\boldsymbol{x} \in \mathbb{S}^{d-1}}P(\boldsymbol{x})d\sigma(\boldsymbol{x}) \quad \mbox{for all $P\in \R[x_1,\ldots,x_d]$ of degree $\le t$},
\end{equation*}
where $\sigma$ is the surface measure on $\mathbb{S}^{d-1}$ and $|\mathbb{S}^{d-1}|=\int_{\boldsymbol{x} \in \mathbb{S}^{d-1}}d\sigma(\boldsymbol{x})$.
This concept was first introduced by Delsarte--Goethals--Seidel in \cite{DelsarteGoethalsSeidel77}.
Since then, the problem of determining the maximum value of $t$ for a given finite subset $X$ of the unit sphere, referred to as the \emph{strength} of $X$, and the characterization of such subsets as spherical designs have been studied in connection with various algebraic structures~\cite{BannaiBannai09,BBTZ17}.

We wish to study these problems in the context of spherical designs of harmonic index, which is a slight generalization of spherical $t$-designs.
For $X\subset \R^d$, we define the \emph{harmonic strength} $T(X)$ of $X$ by
\begin{equation}\label{eq:T(X)}
T(X):=\left\{\ell \in \N \ \middle|\  \sum_{\boldsymbol{x}\in X} P(\boldsymbol{x})=0,\ \forall P\in {\rm Harm}_\ell (\R^d)\right\},
\end{equation}
where ${\rm Harm}_\ell (\R^d)$ denotes the $\R$-vector space of harmonic polynomials of homogeneous degree $\ell$.
Note that $X$ is a spherical $t$-design if and only if $1,2,\ldots,t\in T(X)$.
A finite subset $X\subset \mathbb{S}^{d-1}$ such that $T\subset T(X)$ is called a \emph{spherical $T$-design}.
This concept was introduced by Delsarte--Seidel \cite{DS89} as a spherical analogue of the design in association schemes \cite[Section 3.4]{Delsarte:PHD},
and has been actively studied in a decade (cf.~\cite{BannaiOkudaTagami15,HiraoNozakiTasaka,Miezaki13,MisawaMunemasaSawa,OkudaYu16,Pandey,ZhuBannaiBannaiKimYu17}).

Our fundamental problems are as follows.

\begin{question}\label{Q1}
\begin{itemize}
\item[(i)] Given $X\subset \R^d$, determine $T(X)$.
\item[(ii)] Given $T\subset \N$, characterize finite subsets $X\subset \R^d$ such that $T\subset T(X)$.
\end{itemize}
\end{question}

As an illustrative example of addressing Problem \ref{Q1}, consider a finite subset $X\subset \mathbb{S}^{d-1}$ that is \emph{antipodal}, meaning that $-\boldsymbol{x}\in X$ for all $\boldsymbol{x}\in X$.
In this case, the harmonic index $T(X)$ contains the set $\mathbb{O}^+$ of all positive odd integers, providing a partial solution to (i).
Regarding (ii), a finite subset $X\subset \mathbb{S}^{d-1}$ must necessarily be antipodal if $\mathbb{O}^+\subset T(X)$ (cf.~\cite{MisawaMunemasaSawa}).

Remark that Problem \ref{Q1} (i) is closely connected to the theory of modular forms through spherical theta functions, which was first pointed out by Venkov \cite{Venkov84}.
Regarding Problem \ref{Q1} (ii), we note that for given $t,d\in \N$, the existence of spherical $t$-designs was shown in \cite{SZ84}. 
Bondarenko--Radchenko--Viazovska \cite{BondarenkoRadchenkoViazovska} further showed that for each $N\ge c_d t^{d-1}$, there exists a spherical $t$-design in $\mathbb{S}^{d-1}$ consisting of $N$ points, where $c_d$ is a constant depending only on $d$.

In this paper, we mainly consider a set of certain quaternions as $X$ in Problem \ref{Q1}.
Let us briefly describe it.
Denote by $\mathbb{H}$ the $\R$-algebra of (Hamilton's) \emph{quaternion} with generators $\{i,j\}$ and the relations $i^2=j^2=-1$ and $ij=-ji$.
Taking $\{1,i,j,ij\}$ as an $\R$-basis of $\mathbb{H}$, we will identify $\mathbb{H}$ with $\R^4$ by expressing each quaternion $x\in \mathbb{H}$ as $x=x_1+x_2i+x_3j +x_4 ij$, and then associating it with the vector $\boldsymbol{x}=(x_1,x_2,x_3,x_4)\in \R^4$. 
The subgroup $\mathbb{H}_1:=\{x\in \mathbb{H}\mid N(x)=1\}$ of $\mathbb{H}^\times$, where $N(x)=x_1^2+x_2^2+x_3^2+x_4^2$ is the \emph{norm} of $x$, is then regarded as the unit sphere $ \mathbb{S}^3$ in $\R^4$.

We will start from the following finite subgroups of $\mathbb{H}_1$ (cf.~\cite[Section 3.5]{ConwaySmith}): the \emph{binary tetrahedral} (resp.~\emph{binary octahedral} and \emph{binary tetrahedral}) group $2T$ (resp.~$2O$ and $2I$) of order 24 (resp.~48 and 120). 
The group $2I$ is also refereed as the icosian group in \cite[p.207]{ConwaySloan}.
The quotient groups $2T/\{\pm1\},2O/\{\pm1\}$ and $2I/\{\pm1\}$ are isomorphic to $A_4,S_4$ and $A_5$, respectively, which are known as exceptional finite subgroups of the rotation group ${\rm SO}(3)$ of $\R^3$ (cf.~\cite[Proposition 11.5.2]{Voight2021}).
As a subset of $\R^4$, each $G\in \{2T,2O,2I\}$ is contained in $ F_G^4$, where $F_{2T}:=\Q,\ F_{2O}:=\Q\big(\sqrt{2}\big)$ and $F_{2I}:=\Q\big(\sqrt{5}\big)$ are number fields.

For $G\in \{2T,2O,2I\}$, let $\mathcal{O}_G:=\langle G\rangle_{R_G}$ be the $R_G$-algebra generated by $G$, where $R_G$ is the ring of integers of the number field $F_G$.
The ring $\mathcal{O}_G$ becomes a maximal $R_G$-order of the quaternion algebra over $F_G$ (cf.~\cite[Chapter 11]{Voight2021}).
In particular, the $\Z$-order $\mathcal{O}_{2T}$ is known as the \emph{Hurwitz order}.
Table \ref{table:orders} below gives a basis of $\mathcal{O}_G$.

\begin{table}[h] 
\caption{Orders}
\label{table:orders}
\begin{tabular}{|l|c|c|c|} \hline
Group $G$ & $2T$ & $2O$ & $2I$\\\hline 
Order $|G|$ & 24 & 48 & 120\\\hline
Number field $F_G$ & $\Q$ & $\Q(\sqrt{2})$ & $\Q(\sqrt{5})$\\\hline
Ring of integers $R_G$ & $\Z$ & $\Z[\sqrt{2}]$ & $\Z[\tau]$\\\hline
$R_G$-order $\mathcal{O}_G$ & $\langle 2T\rangle_\Z$ & $\langle 2O\rangle_{\Z[\sqrt{2}]}$ & $\langle 2I\rangle_{\Z[\tau]}$\\\hline
Basis of $\mathcal{O}_G$ & $\{1, i, j , \omega\}$ & $\{1,\alpha,\beta,\alpha\beta\}$ & $\{1 , i, \zeta, i\zeta\}$\\\hline
\end{tabular}
\end{table}
\noindent
Here we have set
\begin{equation}\label{eq:O_basis_elements}
\begin{aligned}
\tau=\frac{1+\sqrt{5}}{2},\ \omega=\frac{-1+i+j+ij}{2},\ \alpha=\frac{1+i}{\sqrt{2}},\  \beta=\frac{1+j}{\sqrt{2}},\ \zeta=\frac{\tau+\tau^{-1}i+j}{2}.
\end{aligned}
\end{equation}

For a positive integer $m$, we let
\[ \mathcal{O}_{G,m} := \{x\in \mathcal{O}_G\mid \iota_G(N(x))=m\},\]
where $\iota_G:R_G\rightarrow \Z$ is a certain projection defined in \eqref{eq:def_iota}.
A key point is that the composition $\iota_G(N(x))$ defines a positive definite integral quadratic form $\mathcal{Q}_G$ (see the proof of Proposition \ref{prop:counting} for the explicit descriptions).
As such, the theory of modular forms developed by Hecke and Schoeneberg becomes applicable.
Using this framework, we will show that $|\mathcal{O}_{G,m}|>0$ for all $m\in\N$ in Proposition \ref{prop:counting}.
Note that when $G=2T$, the subset $\mathcal{O}_{G,m}\subset \R^4$ for $G= 2T$ coincides with the $m$-shell of the Hurwitz order; that is, the set of lattice points on the sphere with radius $\sqrt{m}$ in $\R^4$.
In contrast, when $G\neq 2T$, the subset $\mathcal{O}_{G,m}\subset \R^4$ does not necessarily lie on a sphere in $\R^4$.

\subsection{Main results}

Let us state our results on Problem \ref{Q1} (i).
For each $G\in\{2T,2O,2I\}$, we will show in Lemma \ref{prop:O_strength} that 
\begin{equation}\label{eq:G_acts_on_O_{G,m}}
T(G)\subset T\big(\mathcal{O}_{G,m}\big), \quad \forall m\in \N.
\end{equation}
This implies that determining the harmonic strength $T(G)$ gives a partial answer to Problem \ref{Q1} (i) for the case $X=\mathcal{O}_{G,m}$.
The first main result of this paper is a complete description of $T(G)$.

\begin{theorem}\label{thm:harmonic_strength}
The harmonic strength of the finite quaternionic groups $2T, 2O$ and $2I$ are given by 
\begin{align*}
&{\rm (i)}\ T(2T) = \{ 10,4,2\}\cup \mathbb{O}^+, \\ 
&{\rm (ii)}\ T(2O) = \{22,14,10,6,4,2 \}\cup \mathbb{O}^+, \\
&{\rm (iii)}\ T(2I) =  \{ 58,46,38,34,28,26,22,18,16,14,10,8,6,4,2\}\cup \mathbb{O}^+,
\end{align*}
where $\mathbb{O}^+$ denotes the set of all positive odd integers.
\end{theorem}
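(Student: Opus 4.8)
The plan is to reduce the determination of $T(G)$ to a single, explicitly computable invariant of $G$, namely the number of $G$-fixed vectors in the irreducible representations of $\mathbb{H}_1\cong SU(2)$. First I would dispose of the odd degrees: since $-1\in G$ and $G$ is a group, the set $G$ is antipodal, so $\mathbb{O}^+\subseteq T(G)$ as already observed in the introduction, and it remains only to decide which even $\ell$ lie in $T(G)$.

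The key reformulation I would use is the positive-definiteness of Gegenbauer polynomials (the addition formula). For any finite $X\subset\mathbb{S}^{d-1}$ one has $\sum_{\boldsymbol x,\boldsymbol y\in X}C_\ell(\langle\boldsymbol x,\boldsymbol y\rangle)=c_\ell\sum_j\big(\sum_{\boldsymbol x\in X}\phi_j(\boldsymbol x)\big)^2\ge 0$, where $C_\ell$ is the degree-$\ell$ Gegenbauer polynomial for $\mathbb{S}^{d-1}$, $\{\phi_j\}$ is an orthonormal basis of ${\rm Harm}_\ell(\R^d)$, and $c_\ell>0$; hence $\ell\in T(X)$ if and only if this sum vanishes. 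Taking $X=G\subset\mathbb{S}^3$ and using that left multiplication by any $x\in G$ is an orthogonal map permuting $G$, the double sum collapses to $|G|\sum_{\boldsymbol y\in G}C_\ell(\mathrm{Re}(y))$, where $\mathrm{Re}(y)=\langle\boldsymbol y,\boldsymbol 1\rangle$ is the real part. In dimension $d=4$ the Gegenbauer polynomial is the Chebyshev polynomial of the second kind, $C_\ell(\cos\theta)=\sin((\ell+1)\theta)/\sin\theta$, which is exactly the character $\chi_\ell$ of the $(\ell+1)$-dimensional irreducible representation $V_\ell$ of $SU(2)$. Since each $y\in G$ satisfies $\mathrm{Re}(y)=\cos\theta_y$ for its eigenvalue angle $\theta_y$, I obtain $\sum_{\boldsymbol y\in G}C_\ell(\mathrm{Re}(y))=\sum_{y\in G}\chi_\ell(y)=|G|\,m_\ell$, where $m_\ell:=\dim(V_\ell)^G$ is the multiplicity of the trivial representation in $V_\ell|_G$. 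This yields the clean equivalence $\ell\in T(G)\iff m_\ell=0$.

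The remaining step is to compute $m_\ell$ and locate its zeros. Here I would invoke the classical invariant theory of the binary polyhedral groups: the graded ring $\bigoplus_\ell(V_\ell)^G=\C[x_1,x_2]^G$ of invariants of the defining action on $\C^2$ is a complete intersection with Poincaré series $\sum_\ell m_\ell t^\ell=(1+t^{h})/((1-t^{a})(1-t^{b}))$, where $(a,b,h)=(6,8,12),(8,12,18),(12,20,30)$ for $G=2T,2O,2I$ (the Klein invariants). Reading off the coefficients, the even $\ell\ge 2$ with $m_\ell=0$ are controlled by the numerical semigroup generated by $a,b$ together with the shift by $h$; an elementary congruence-and-semigroup argument shows that this set is finite and equals $\{2,4,10\}$, $\{2,4,6,10,14,22\}$, and $\{2,4,6,8,10,14,16,18,22,26,28,34,38,46,58\}$ in the three cases, which are precisely the even parts of the sets in (i), (ii), (iii).

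I expect the main obstacle to be the conceptual reduction of the second paragraph rather than any single calculation: recognizing that the four-dimensional Gegenbauer polynomial is the $SU(2)$-character makes the positive-semidefinite expression evaluate to $|G|^2 m_\ell$, which simultaneously gives $\ell\in T(G)\iff m_\ell=0$ and obviates a separate argument that the invariant harmonics do not all vanish at the pole $\boldsymbol 1$. The only other care needed is the bookkeeping for the Molien coefficients, in particular verifying that $m_\ell>0$ for all sufficiently large even $\ell$ so that the exceptional sets are genuinely finite; this holds because the semigroup generated by $a,b$ covers all large multiples of $\gcd(a,b)=2$ in one residue class modulo $4$, while the numerator term $t^{h}$ supplies the other.
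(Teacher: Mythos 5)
Your proposal is correct and follows essentially the same route as the paper: the identity $\sum_{x,y\in G}C^{1}_{\ell}(\langle x,y\rangle)=|G|^{2}\dim_\C {\rm Hom}_{\ell}(\C^2)^{G}$ that you derive via $SU(2)$-characters (Chebyshev $U_\ell$ = character of ${\rm Sym}^\ell\C^2$, then averaging over $G$) is exactly the degreewise form of Lemma \ref{lem:SU(2)}, which the paper proves by collapsing the double sum with $\langle x,y\rangle=\langle 1,x^{-1}y\rangle$ and summing $1/\det(I-uC_x)=\sum_{\ell\ge0}\chi_\ell(x)u^\ell$ into the Molien series. Both arguments then read off the vanishing coefficients of the same Poincar\'e series $(1+u^{h})/\big((1-u^{a})(1-u^{b})\big)$ of Table \ref{tab:molien} by the same numerical-semigroup bookkeeping, and your three exceptional sets agree with Theorem \ref{thm:harmonic_strength} --- apart from the harmless slip that $\gcd(a,b)=4$, not $2$, for $2O$ and $2I$ (there the semigroup $\langle a,b\rangle$ covers all large degrees $\equiv 0 \bmod 4$ and the $u^{h}$-shifted copy those $\equiv 2 \bmod 4$, while for $2T$ the semigroup $\langle 6,8\rangle$ alone covers all large even degrees), which does not affect your final answer.
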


Let us turn to Problem \ref{Q1} (ii).
We first use the \emph{linear programming method} established in \cite{DelsarteGoethalsSeidel77} to obtain a lower bound for the cardinality of finite subsets of $\mathbb{S}^3$ whose harmonic strength includes (more precisely, contains a subset of) $T(G)$.
Our result shows that each $G\in\{2T,2O,2I\}$ is the `minimal' subset of $\mathbb{S}^3$ among those whose harmonic strength contains $T(G)$.  
\begin{theorem}\label{thm:LP_bound}
Let $X$ be an antipodal finite subset of $\mathbb{S}^3$.
\begin{itemize}
\item[(i)] If $ T(X)\supset \{10,4,2\} $, then $|X|\ge 24$.
\item[(ii)] If $  T(X)\supset \{14,10,6,4,2\}$, then $|X|\ge 48$.
\item[(iii)] If $T(X)\supset \{10,8,6,4,2\}$, then $|X|\ge 120$.
\end{itemize}
\end{theorem}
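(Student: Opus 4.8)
The plan is to apply the Delsarte--Goethals--Seidel linear programming bound on $\mathbb{S}^3$. On $\mathbb{S}^3$ the relevant zonal functions are the suitably normalized Gegenbauer polynomials $G_\ell$ (here essentially the Chebyshev polynomials of the second kind, since $d=4$), for which Schoenberg positive-definiteness gives $\sum_{\boldsymbol x,\boldsymbol y\in X}G_\ell(\langle \boldsymbol x,\boldsymbol y\rangle)\ge 0$, with equality exactly when $\ell\in T(X)$. Writing a one-variable polynomial $F=\sum_\ell f_\ell G_\ell$ and evaluating $\sum_{\boldsymbol x,\boldsymbol y\in X}F(\langle \boldsymbol x,\boldsymbol y\rangle)$ in two ways---through its Gegenbauer expansion and by separating the diagonal terms---yields the standard implication: if $f_0>0$, if $f_\ell\le 0$ for every $\ell\ge 1$ with $\ell\notin T(X)$, and if $F(t)\ge 0$ for all $t\in[-1,1]$, then $|X|\ge F(1)/f_0$. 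The entire argument reduces to exhibiting, in each case, an explicit $F$ meeting these three conditions with $F(1)/f_0$ equal to $24$, $48$, and $120$.

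First I would exploit antipodality. Since $-\boldsymbol x\in X$ forces $\mathbb{O}^+\subset T(X)$, all odd-degree coefficients $f_\ell$ are automatically unconstrained, so only the even degrees outside the prescribed set impose genuine sign conditions. To make the off-diagonal sum vanish at the value $-1$ contributed by antipodal pairs, it is natural to build $F$ with a factor $(1+t)$, writing $F(t)=(1+t)H(t)$ with $H\ge 0$ on $[-1,1]$, so that the even part of $F$ carries the essential LP information while the odd freedom absorbs the endpoint.

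Next, for each $G$ I would read off the nontrivial inner products from the explicit coordinates coming from \eqref{eq:O_basis_elements}: these are $\{0,\pm\tfrac12\}$ for $2T$, $\{0,\pm\tfrac12,\pm\tfrac1{\sqrt2}\}$ for $2O$, and $\{0,\pm\tfrac12,\pm\tfrac{\tau}2,\pm\tfrac{\tau-1}2\}$ for $2I$. The candidate extremal $F$ must vanish (to even order, by nonnegativity) at these interior values and, via the $(1+t)$ factor, at $-1$, so that the off-diagonal contribution is killed for the conjecturally optimal configuration. The remaining degrees of freedom, bounded by the largest admissible design degree appearing in the hypothesis, are then spent forcing the Gegenbauer coefficients at the gap degrees to be nonpositive: $\ell=6,8$ in case (i), $\ell=8,12$ in case (ii), and the even degrees $\ge 12$ in case (iii). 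Solving the resulting linear system pins down $F$ up to scale, and one then normalizes and computes $F(1)/f_0$.

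The main obstacle is precisely this construction, and it is most severe for $2I$. The two requirements pull against each other: imposing double zeros at the inner-product values (seven of them for $2I$, forcing degree at least $14$ in the even part) tends to make the top even-degree coefficient \emph{positive}, whereas the sign constraints demand nonpositivity there. Reconciling these---keeping $F\ge 0$ on all of $[-1,1]$ while making every non-design even coefficient nonpositive and still hitting the sharp ratio---is where the full strength of the antipodal factor $(1+t)$ and of the unconstrained odd degrees must be brought to bear, and it is the computational heart of the proof. I would verify global nonnegativity by exhibiting $H$ as a sum of squares times manifestly nonnegative factors, and check each gap-degree coefficient directly; once these are in hand, the bound follows immediately from the formal LP inequality of the first paragraph.
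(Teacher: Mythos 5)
Your skeleton is essentially the paper's: the Delsarte--Goethals--Seidel linear programming inequality (the paper's Lemma~3.5) applied with test functions whose double zeros sit at the inner products of the target group, and whose Gegenbauer coefficients are nonpositive at the gap degrees. The only structural difference is packaging: the paper passes to a half set $X'$ (so $|X|=2|X'|$ and only even degrees matter), whereas you keep the full antipodal set and insert the factor $(1+t)$. These are equivalent: for $F(t)=(1+t)H(t)$ with $H$ even, the even-degree Gegenbauer coefficients of $F$ coincide with those of $H$ and $F(1)=2H(1)$, so your ratio $F(1)/f_0$ is exactly twice the paper's half-set bound, which is what the theorem requires. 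Your inner-product lists, your gap degrees ($6,8$ in case (i); $8,12$ in case (ii); $12,14,16$ in case (iii)), and the three LP conditions are all correctly identified.

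Two caveats. First, the proposal stops at ``solving the resulting linear system pins down $F$''; the explicit certificates are the entire nontrivial content here, and you never verify that the system admits a solution meeting all sign conditions with the sharp ratio. The paper exhibits them: up to normalization, $F_{2T}(s)=s^2\left(s^2-\tfrac14\right)^2\left(\left(s^2-\tfrac78\right)^2+\tfrac34\right)$, $F_{2O}(s)=s^2\left(s^2-\tfrac14\right)^2\left(s^2-\tfrac12\right)^2\left(\left(s^2-\tfrac78\right)^2+\tfrac14\right)$, and $F_{2I}(s)=s^2\left(s^2-\tfrac{\tau^{-2}}{4}\right)^2\left(s^2-\tfrac{\tau^2}{4}\right)^2\left(s^2-\tfrac14\right)^2\left(\tfrac65-s^2\right)$. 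Second, your diagnosis of how the tension in case (iii) is resolved is off target: the unconstrained odd degrees cannot help at all, since (as noted above) the even-degree coefficients of $(1+t)H$ are independent of the odd part. What actually fixes the signs at the top is an extra \emph{even} factor that is positive on $[-1,1]$ but has negative leading coefficient: multiplying the degree-$14$ product of double zeros by $\left(\tfrac65-s^2\right)$ makes $f_{16}<0$ and $f_{14}<0$ while $f_{12}=0$ and preserves nonnegativity on the interval. With these functions in hand, your formal LP step does yield $|X|\ge 24$, $48$, $120$, so the plan is sound but incomplete without the certificates.
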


In the case of spherical $t$-designs, the linear programming method provides a so-called Fisher type bound: if $X\subset \mathbb{S}^{d-1}$ is a spherical $t$-design, then $|X|\ge b_{d,t}$, where $b_{d,t}= \binom{d+e-1}{e}+\binom{d+e-2}{e-1}$ if $t=2e$ and $b_{d,t}= 2\binom{d+e-1}{e}$ if $t=2e+1$.
However, it is not known whether the lower bound obtained by this method provides the maximum lower bound of spherical $t$-designs for a given $t$.
Circumstantially, a tight design, meaning that a spherical $t$-design $X$ that attains the lower bound $|X|=b_{d,t}$, is known to have some good extremal properties \cite{BannaiSloane81,CK07}, if it exists, and its classification has therefore been studied extensively.

In a similar vein, the study of classification of `tight' spherical $T$-designs has attracted a lot of attention (although no general formulation of tightness is known).
It was started in \cite{BannaiOkudaTagami15} for the case $T=\{t\}$.
The case $t = 4$ was also investigated in \cite{OkudaYu16}. 
Zhu et al.~\cite{ZhuBannaiBannaiKimYu17} obtained the classification 
of tight spherical $\{t\}$-designs for $t=6,8$, 
as well as the asymptotic non-existence of tight spherical $\{2e\}$-designs for $e \geq 3$. 
They also studied the existence problem for tight spherical $T$-designs for some $T$, including the case $T = \{8, 4\}$. 

Our classification problem is based on the fact that, for an antipodal spherical $T$-design $X$ of $\mathbb{S}^{d-1}$ with $N$ points, 
its orthogonal transformation $g(X):=\{g(\boldsymbol{x})\mid \boldsymbol{x}\in X\}$ for $g\in O(\R^d)$ is again an antipodal spherical $T$-design of $\mathbb{S}^{d-1}$ with $N$ points, where $O(\R^d):=\{g:\R^d\rightarrow \R^d:\mbox{linear map}\mid\langle g (\boldsymbol{x}),g(\boldsymbol{y})\rangle=\langle\boldsymbol{x},\boldsymbol{y}\rangle, \ \forall \boldsymbol{x},\boldsymbol{y}\in \R^d\}$ is the orthogonal transformation group. 
We say $X$ is {\itshape unique} as an antipodal spherical $T$-design of $\mathbb{S}^{d-1}$ with $N$ points if every antipodal spherical $T$-design of $\mathbb{S}^{d-1}$ with $N$ points is an orthogonal transformation of $X$.

\begin{theorem}\label{thm:uniqueness}
\begin{itemize}
\item[(i)] The group $2T$ is unique as an antipodal spherical $\{10,4,2\}$-design of $\mathbb{S}^3$ with $24$ points.
\item[(ii)] The group $2O$ is unique as an antipodal spherical $\{14,10,6,4,2\}$-design of $\mathbb{S}^3$ with $48$ points.
\item[(iii)] The group $2I$ is unique as an antipodal spherical $\{10,8,6,4,2\}$-design of $\mathbb{S}^3$ with $120$ points.
\end{itemize}
\end{theorem}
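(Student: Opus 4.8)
The plan is to deduce from the hypotheses that $X$ has the same inner-product set and the same angular distribution as $G$, and then to reconstruct $X$ from this data up to an element of $O(\R^4)$. The three cases proceed in parallel, with the icosahedral case (iii) being the most delicate.

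First I would exploit the linear programming certificate underlying Theorem \ref{thm:LP_bound}. For each $G$ this bound is established by a polynomial $F_G(t)=\sum_{k\ge 0}f_kQ_k(t)$, where $Q_k$ is the Gegenbauer polynomial for $\mathbb{S}^3$ normalized so that $\sum_{\boldsymbol x,\boldsymbol y\in X}Q_k(\langle\boldsymbol x,\boldsymbol y\rangle)\ge 0$, chosen so that $f_0>0$, $f_k\le 0$ for $k\ge 1$ with $k\notin T_G$, and $F_G(t)\ge 0$ on $[-1,1)$, with $|G|=F_G(1)/f_0$. Since $X$ is an antipodal spherical $T_G$-design with $|X|=|G|$, the bound is attained with equality, and the standard equality analysis yields two facts: every inner product $\langle\boldsymbol x,\boldsymbol y\rangle$ with $\boldsymbol x\ne\boldsymbol y$ in $X$ is a root of $F_G$ in $[-1,1)$, so the inner-product set $A(X)$ is contained in the explicit finite set $A_G$ of such roots; and $\sum_{\boldsymbol x,\boldsymbol y}Q_k(\langle\boldsymbol x,\boldsymbol y\rangle)=0$ whenever $f_k<0$, which together with antipodality and Theorem \ref{thm:harmonic_strength} forces $T(G)\subseteq T(X)$, supplying abundantly many vanishing harmonic moments. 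By construction $A_G$ is precisely the inner-product set of $G$ (for example $A_{2T}=\{-1,-\tfrac12,0,\tfrac12\}$, the angle set of the $24$-cell).

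Next I would fix a point $\boldsymbol x_0\in X$ and determine the layer sizes $a_\alpha=|\{\boldsymbol y\in X\setminus\{\boldsymbol x_0\}:\langle\boldsymbol x_0,\boldsymbol y\rangle=\alpha\}|$ for $\alpha\in A_G$. Antipodality gives $a_{-1}=1$ and $a_\alpha=a_{-\alpha}$, and the conditions $\sum_{\boldsymbol y}Q_k(\langle\boldsymbol x_0,\boldsymbol y\rangle)=0$ for suitable $k\in T(G)\subseteq T(X)$, together with $\sum_\alpha a_\alpha=|G|-1$, form a linear system with a unique nonnegative integer solution, namely the distribution of $G$; hence $A(X)=A_G$ and the $0/1$ matrices recording each inner-product value assemble into an association scheme with the same parameters as the one carried by $G$. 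After an element of $O(\R^4)$ I may assume $\boldsymbol x_0$ is the north pole, so that $X$ splits into parallel layers $X_\alpha=\{\boldsymbol y\in X:\langle\boldsymbol x_0,\boldsymbol y\rangle=\alpha\}$, each lying on a $2$-sphere of radius $\sqrt{1-\alpha^2}$. I would then show that, after rescaling, each extremal layer is a spherical design on $\mathbb{S}^2$ of prescribed cardinality and inner-product set, forcing it to be an orbit of the associated polyhedral rotation group $A_4,S_4,A_5$ (a highly symmetric configuration such as an octahedron, cube, icosahedron or dodecahedron), which is rigid, i.e. unique up to $O(\R^3)$. This fixes the extremal layer, up to the stabilizer of $\boldsymbol x_0$, to agree with the corresponding layer of $G$, and since the Gram matrix of $X$ has rank $4$, every remaining point is thereafter pinned down by its inner products with four affinely independent already-placed points.

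I expect the main obstacle to be the \emph{inter-layer rigidity}: excluding a relative ``twist'' between the polyhedral layers that would produce a configuration with the same inner-product set and distribution as $G$ yet not $O(\R^4)$-equivalent to it. The two-distance structure within a single layer does not rule this out; here I would use the full force of $T(G)\subseteq T(X)$ from Theorem \ref{thm:harmonic_strength}, i.e. the unusually many vanishing harmonic moments, to show that the cross-layer inner products, which must lie in $A_G$ with the already-determined multiplicities, satisfy a system whose only solution compatible with the rank-$4$ Gram constraint is the one realized by $G$. Verifying that no exotic alignment survives, most intricately for the $120$-point icosahedral configuration underlying the $600$-cell, will be the heart of the argument, after which $X$ is identified with an orthogonal image of $G$.
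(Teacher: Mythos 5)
Your skeleton (LP equality analysis $\Rightarrow A(X)\subset A_G$, then layer sizes, then identification of rigid layers, then reconstruction) matches the paper's proof of part (ii), but your load-bearing ingredient is unavailable. From equality in the linear programming bound you may only conclude $\ell\in T(X)$ for those $\ell\notin T$ whose Gegenbauer coefficient $f_\ell$ in the test function is \emph{strictly negative}; you cannot conclude $T(G)\subseteq T(X)$. Theorem \ref{thm:harmonic_strength} is a statement about the group $G$ itself and says nothing about a competitor $X$. Concretely: the paper's test function $F_{2O}$ has all its Gegenbauer coefficients positive and supported on $\{14,10,6,4,2\}\cup\{0\}$, so equality yields no vanishing moments beyond the hypothesis --- in particular nothing forces $22\in T(X)$, even though $22\in T(2O)$; likewise for $2I$ equality adds only $14,16$ (the two negative coefficients of $F_{2I}$), not $18,22,26,28,34,38,46,58$. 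Only in case (i) does $T(G)\subseteq T(X)$ hold, and then trivially from the hypothesis plus antipodality. Since you explicitly invoke ``the full force of $T(G)\subseteq T(X)$'' to exclude the inter-layer twist --- which you yourself identify as the heart of the argument --- the crucial step is both unjustified in its premise and deferred in its execution: no mechanism that actually works is stated.

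Two further overclaims. First, not every layer is a rigid polyhedral orbit: in case (ii) the equatorial layer $X_0$ has $18$ points and is none of your listed configurations; the paper pins it down last by a counting argument (inner products with the already-fixed layer $X_{1/\sqrt{2}}$ force each coordinate into $\{0,\pm1,\pm1/\sqrt{2}\}$, and the resulting candidate set has exactly $18$ elements, so it must all be used). Second, even the $8$-point layer is not automatically a cube: the paper must first exclude the inner product $(2\sqrt{2}-1)/3$ from the derived code $\widetilde{X}_{1/2}$ by a rationality argument applied to $\sum_{\widetilde{\boldsymbol{x}},\widetilde{\boldsymbol{y}}}\langle\widetilde{\boldsymbol{x}},\widetilde{\boldsymbol{y}}\rangle=0$, and then solve a distance-distribution system to identify the cube. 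The inter-layer rigidity itself is handled in the paper not by extra harmonic moments but by Lemma 1 and Corollary 2 of Boyvalenkov--Danev, which compute the distance distribution of the cube layer relative to each point of the octahedron layer (the values $4,4,0$, ruling out the inner product $\sqrt{2/3}-1/\sqrt{3}$), after which explicit coordinates are forced; to repair your proposal you would need to substitute this (or an equivalent derived-code computation in the style of Delsarte--Goethals--Seidel, Section 8) for the unavailable $T(G)\subseteq T(X)$. Note finally that the paper proves only (ii), quoting its earlier $D_4$ result for (i) and the known uniqueness of the $600$-cell for (iii); your plan to treat (iii) ``in parallel'' underestimates that case, whose proof is considerably more intricate than the sketch suggests.
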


Theorem \ref{thm:uniqueness} is referred to as a uniqueness theorem in the study of the classification of spherical designs.
Remark that Theorem \ref{thm:uniqueness} (i) is obtained from Theorem 1.1 in our previous work \cite{HiraoNozakiTasaka}; we have shown the uniqueness of the normalized $D_4$ root system $\frac{1}{\sqrt{2}}{\bf D}_4$, all permutations of $\frac{1}{\sqrt{2}}(\pm1,\pm1,0,0)$, as an antipodal spherical $\{10,4,2\}$-design of $\mathbb{S}^3$ with 24 points. 
Indeed, the set $2T$ is an orthogonal transformation of $\frac{1}{\sqrt{2}}{\bf D}_4$ (see Remark \ref{rem:D_4}).
Theorem \ref{thm:uniqueness} (iii) is also a consequence of 
the uniqueness of the 600-cell as a spherical $11$-design of $\mathbb{S}^3$ with 120 points shown in \cite{BoyvalenkovDanev01}.
In this paper, we will prove Theorem \ref{thm:uniqueness} (ii) only.

\subsection{Spherical theta functions}
Let $G\in\{2T,2O,2I\}$. 
We will also consider a connection with the \emph{spherical theta function} $\theta_{G,P}(z) $ defined for $P\in {\rm Harm}_\ell(\R^4)$ by
\begin{equation}\label{eq:theta} 
\theta_{G,P}(z) := \sum_{x\in \mathcal{O}_{G}} P(x) q^{\iota_G(N(x))} \quad (q=e^{2\pi iz}).
\end{equation}
This function is holomorphic on the complex upper half-plane and becomes an elliptic modular form of weight $2+\ell$ for $\Gamma_0(2)$ when $G=2T$, of weight $4+\ell$ for $\Gamma_0(2)$ when $G=2O$, and of weight $4+\ell$ for ${\rm SL}_2(\Z)$ when $G=2I$, as shown by Hecke and Schoeneberg (cf.~\cite[\S3.1]{Ebeling}, \cite[\S4.9]{Miyake}, \cite[\S6]{Ogg}).
The spherical theta function can be written as
\begin{equation*}\label{eq:q_expansion}
\theta_{G,P}(z) = \sum_{m\ge0} \bigg(\sum_{x\in\mathcal{O}_{G,m}} P(x)\bigg)q^{m}.
\end{equation*} 
From this, we see that, for $\ell\in \N$, the condition $\ell \in T\big(\mathcal{O}_{G,m}\big)$ is equivalent to the statement that the coefficient of $q^{m}$ vanishes in each of modular forms $\theta_1,\ldots,\theta_v$, where $\theta_1,\ldots,\theta_v$ form a basis of the $\C$-vector space 
\begin{equation*}\label{eq:def_V} 
\Theta(G,\ell):= \langle \theta_{G,P}(z) \mid P\in {\rm Harm}_\ell(\R^4)\rangle_\C,
\end{equation*}
which is a subspace of the $\C$-vector space of modular forms.
In particular, if $\dim_\C \Theta(G,\ell)=0$, then $\ell \in T(\mathcal{O}_{G,m})$ holds for all $m\in \N$.
This idea was employed in \cite{Pache05, Venkov84} to determine which degrees are included in the harmonic strength of the shells of a lattice. In fact, by applying the theory of modular forms to estimate the dimension of the space of spherical theta functions of a lattice, they partially determined the harmonic strength of the shells of several important lattices.

In our case, we obtain the following relationship between the harmonic strength $T(G)$ and the dimension of the space $\Theta(G,\ell)$.

\begin{theorem}\label{thm:dim=0}
For $G\in\{2T,2O,2I\}$, we have that $\dim_\C \Theta(G,\ell)=0$ if and only if $\ell \in T(G)$.
\end{theorem}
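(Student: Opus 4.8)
The plan is to translate the vanishing of $\Theta(G,\ell)$ into a condition on the individual shells $\mathcal{O}_{G,m}$ and then to reduce the whole statement to a single shell. Since $\Theta(G,\ell)$ is by definition the $\C$-span of the functions $\theta_{G,P}$, we have $\dim_\C\Theta(G,\ell)=0$ precisely when $\theta_{G,P}\equiv 0$ for every $P\in {\rm Harm}_\ell(\R^4)$. Reading off the $q$-expansion \eqref{eq:q_expansion}, this is the statement that $\sum_{x\in\mathcal{O}_{G,m}}P(x)=0$ for all $m$ and all such $P$, i.e.
\[ \dim_\C\Theta(G,\ell)=0 \iff \ell\in\bigcap_{m\ge 1}T\big(\mathcal{O}_{G,m}\big) \]
(the $q^0$-coefficient is $P(0)=0$ once $\ell\ge 1$, and for $\ell=0$ both sides of the theorem fail, so the equivalence holds there as well). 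Thus the theorem is equivalent to the set equality $\bigcap_{m\ge 1}T(\mathcal{O}_{G,m})=T(G)$, and I would prove the two inclusions separately.

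The inclusion $T(G)\subseteq\bigcap_{m\ge 1}T(\mathcal{O}_{G,m})$ is exactly \eqref{eq:G_acts_on_O_{G,m}} of Lemma \ref{prop:O_strength}: if $\ell\in T(G)$, then every shell-sum vanishes, so each $\theta_{G,P}$ is identically zero and $\Theta(G,\ell)=0$. This gives the implication $\ell\in T(G)\Rightarrow\dim_\C\Theta(G,\ell)=0$ for free once Lemma \ref{prop:O_strength} is available (its mechanism being that left multiplication by $g\in G$ is an orthogonal map of $\R^4$ preserving $N$, hence permutes each shell and fixes ${\rm Harm}_\ell(\R^4)$, exhibiting $\mathcal{O}_{G,m}$ as a union of rotated copies of $G$).

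For the reverse implication it suffices to isolate one shell that equals $G$ on the nose, and the natural candidate is the first one. The crux of the proof is therefore the claim
\[ \mathcal{O}_{G,1}=G,\qquad G\in\{2T,2O,2I\}. \]
Granting it, $\dim_\C\Theta(G,\ell)=0$ forces $\ell\in T(\mathcal{O}_{G,1})=T(G)$, completing the equivalence. To prove the claim I would first record that $\iota_G$ in \eqref{eq:def_iota} is normalized with $\iota_G(1)=1$, so that $G\subseteq\mathcal{O}_{G,1}$ because every $g\in G$ satisfies $N(g)=1$; that $\mathcal{O}_{G,1}\ne\emptyset$ is also consistent with Proposition \ref{prop:counting}. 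For the reverse containment I would use the explicit positive definite integral form $\mathcal{Q}_G=\iota_G\circ N$ computed in the proof of Proposition \ref{prop:counting}: since the reduced norm $N(x)$ is totally positive, the explicit shape of $\iota_G$ shows that $\iota_G(N(x))=1$ forces $N(x)=1$, the smallest totally positive norm. It then remains to identify the norm-one elements of $\mathcal{O}_G$ with $G$.

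The main obstacle is this last identification, namely ruling out any ``extra'' norm-one units in $\mathcal{O}_{G,1}$. Here I would invoke that $\mathcal{O}_G$ is a maximal order in a totally definite quaternion algebra over $F_G$, so its group of norm-one units is finite and embeds as a finite subgroup of ${\rm SU}(2)$; by the classification of finite subgroups of ${\rm SU}(2)$, the only binary polyhedral group realizable over $F_{2T}=\Q$, $F_{2O}=\Q(\sqrt2)$, $F_{2I}=\Q(\sqrt5)$ and contained in $\mathcal{O}_G$ is $G$ itself (for $2T$ these are the $24$ Hurwitz units, and for $2I$ the $120$ icosians, cf.~\cite{ConwaySloan}). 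Equivalently, one may read off $|\mathcal{O}_{G,1}|=|G|$ from the counting in Proposition \ref{prop:counting} and combine it with $G\subseteq\mathcal{O}_{G,1}$. Either route establishes $\mathcal{O}_{G,1}=G$ and hence the theorem; I emphasize that the whole argument is combinatorial and group-theoretic and uses no dimension formula for spaces of modular forms, in keeping with the aim stated in the introduction.
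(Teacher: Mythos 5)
Your overall architecture (reduce the theorem to the set identity $\bigcap_{m\ge1}T(\mathcal{O}_{G,m})=T(G)$, get one inclusion from Lemma \ref{prop:O_strength}, and settle the other on the first shell) is exactly the paper's, but your crux claim $\mathcal{O}_{G,1}=G$ is \emph{false} for $G=2I$, and the step you use to justify it fails precisely there. You assert that ``the explicit shape of $\iota_G$ shows that $\iota_G(N(x))=1$ forces $N(x)=1$.'' For $G=2O$ this is correct: $N(x)=1+b\sqrt2$ totally positive forces $|b|<1/\sqrt2$, hence $b=0$. But for $G=2I$ one has $\iota_{2I}(a+b\tau)=a$, and $1+\tau=\tau^2$ is totally positive (its conjugate is $1+\tau'=(3-\sqrt5)/2>0$), so the scalar $\tau\in\mathcal{O}_{2I}$ satisfies $\mathcal{Q}_{2I}(\tau)=\iota_{2I}(\tau^2)=1$ while $N(\tau)\ne1$. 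Indeed Proposition \ref{prop:counting} (iii) — which you cite as corroboration — states $\mathcal{O}_{2I,1}=2I\cup\tau\,2I$ with $|\mathcal{O}_{2I,1}|=240\sigma_3(1)=240\ne120=|2I|$, so your fallback counting argument ``$|\mathcal{O}_{G,1}|=|G|$ from Proposition \ref{prop:counting}'' contradicts the paper's own formula in this case. Your classification-of-finite-subgroups argument correctly identifies the \emph{norm-one} elements of $\mathcal{O}_{2I}$ with $2I$, but $\mathcal{O}_{2I,1}$ is the level set of $\mathcal{Q}_{2I}=\iota_{2I}\circ N$, not of $N$; you have conflated the two.

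The gap is repairable, and the paper's repair is short: since every $P\in{\rm Harm}_\ell(\R^4)$ is homogeneous of degree $\ell$,
\begin{equation*}
\sum_{x\in\mathcal{O}_{2I,1}}P(x)=\sum_{x\in 2I}P(x)+\sum_{x\in 2I}P(\tau x)=\left(1+\tau^\ell\right)\sum_{x\in 2I}P(x),
\end{equation*}
and $1+\tau^\ell\ne0$, so $T(\mathcal{O}_{2I,1})=T(2I)$ even though $\mathcal{O}_{2I,1}\supsetneq 2I$; with this substitute for your false identity, the rest of your argument goes through verbatim. For $G\in\{2T,2O\}$ your reduction is sound (the paper verifies $\mathcal{O}_{2T,1}=2T$ and $\mathcal{O}_{2O,1}=2O$ by explicitly solving $\mathcal{Q}_G(x)=1$ rather than by your unit-group classification, but your route is legitimate there, provided you also note that finiteness of the norm-one group uses total definiteness via both real embeddings, as $\mathcal{O}_{2O}\subset\R^4$ under a single embedding is not discrete).
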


It is worth emphasizing that, combining Theorems \ref{thm:harmonic_strength} and \ref{thm:dim=0}, we can determine all $\ell\in \N$ for which the space $\Theta(G,\ell)$ is zero-dimensional.
Our proof does not rely on the theory of modular forms.

We remark that the problem whether the space of modular forms is generated by spherical theta functions is a version of the basis problem as raised by Eichler \cite{Eichler} (see also \cite{Martin} and references therein). 
While they use all classes of lattices with the same genus, we focus on the spherical theta functions associated with a single lattice, from the viewpoint of spherical designs.

\subsection{Contents}
In Section 2, we give explicit descriptions of the groups $2T$, $2O$, and $2I$.
We then recall the notion of spherical designs and present a method, using Gegenbauer polynomials and distance distributions, to determine whether $\ell \in T(X)$ for a given finite subset $X \subset \mathbb{S}^{d-1}$.
Section 3 is devoted to the proofs of our main results: Theorems \ref{thm:harmonic_strength}, \ref{thm:LP_bound}, and \ref{thm:uniqueness}.
In Section 4, we first describe the quadratic form $\mathcal{Q}_G$ obtained from $\iota_G(N(x))$ and compute the set $\mathcal{O}_{G,1}$ to prove that $|\mathcal{O}_{G,m}| > 0$ for all $m \in \mathbb{N}$, using the theory of modular forms.
Theorem \ref{thm:dim=0} is proved based on the computation of $\mathcal{O}_{G,1}$.
Additional data on $\Theta(G, \ell)$ such as a dimension conjecture will be given.

\section{Preliminaries}
\subsection{Exceptional quaternionic unit groups}
We provide a brief overview of quaternions from the books \cite{ConwaySmith,Coxeter}.
Let $\mathbb{H}=\{x_1+x_2i+x_3j +x_4k \mid x_1,\ldots,x_4\in \R\}$ be the $\R$-algebra of quaternions, where $1,i,j,k$ are an $\R$-basis such that $i^2=j^2=-1$ and $ij=k=-ji$.
Its (non-commutative) multiplication is given for $x=x_1+x_2i+x_3j+x_4k$ and $y = y_1+y_2i +y_3j+ y_4k$ by
\begin{equation}\label{eq:product}
\begin{aligned}
xy&= (x_1 y_1 - x_2 y_2 - x_3 y_3 - x_4 y_4)+( x_2 y_1 + x_1 y_2 - x_4 y_3 + x_3 y_4)i\\
&+( x_3 y_1 + x_4 y_2 + x_1 y_3 - x_2 y_4)j+ ( x_4 y_1 - x_3 y_2 + x_2 y_3 + x_1 y_4)k.
\end{aligned}
\end{equation}
For $x=x_1+x_2i+x_3j+x_4k\in \mathbb{H}$, denote by $\bar{x}=x_1-x_2i-x_3j- x_4k$ the \emph{conjugate} of $x$ and by 
$N(x)=x\bar{x}=
x_1^2+x_2^2+x_3^2+x_4^2$
the (reduced) \emph{norm} of $x$.
The norm map $N: \mathbb{H}^\times\rightarrow \R_{>0}^\times$ is a surjective group homomorphism.

The set of unit quaternions $\mathbb{H}_1=\{\varepsilon \in \mathbb{H}\mid N(\varepsilon)=1\}$ forms a subgroup of $\mathbb{H}^\times$.
Let $\varepsilon \in \mathbb{H}_1$. 
We note that $\varepsilon^{-1}=\bar{\varepsilon}$.
For $x\in {\rm Im}\ \mathbb{H}:=\{x_2i+x_3j+x_4k\mid x_2,x_3,x_4\in \R\}$, we see that $\varepsilon x \varepsilon^{-1} \in {\rm Im}\, \mathbb{H}$.
The matrix $Z_x$ defined by the coefficient matrix $(\varepsilon i\varepsilon^{-1},\varepsilon j \varepsilon^{-1},\varepsilon k \varepsilon^{-1})=(i,j,k) Z_\varepsilon$ is a rotation matrix, i.e., $Z_x\in {\rm SO}(3):=\{A\in {\rm O}(3)\mid  \det A=1\}$.
From this, the map
\[ \rho:  \mathbb{H}_1 \rightarrow {\rm SO}(3), \quad \varepsilon \mapsto Z_\varepsilon\]
is a homomorphism.
Moreover, the map $\rho$ is surjective.
Its kernel is $\{\pm 1\}$ (\cite[Corollary 2.4.21]{Voight2021}).
Namely, we obtain
\[\mathbb{H}_1/\{\pm1\}\cong {\rm SO}(3).\]
Thus, for any finite subgroup $G$ of $\mathbb{H}_1$ with $-1\in G$, we have the embedding $G/\{\pm 1\}\rightarrow {\rm SO}(3)$.
Here we recall that every finite subgroups of ${\rm SO}(3)$ is completely classified as follows (cf.~\cite[Proposition 11.5.2]{Voight2021}): 
\begin{itemize}
\item[(i)] a cyclic group; 
\item[(ii)] a dihedral group;
\item[(iii)] the tetrahedral group $A_4$ of order 12;
\item[(iv)] the octahedral group $S_4$ of order 24; or
\item[(v)] the icosahedral group $A_5$ of order 60.
\end{itemize}

Finite subgroups of $\mathbb{H}_1$ whose images under the map $\rho$ are isomorphic to the cases (iii), (iv) and (v) above are called the binary tetrahedral group $2T$ of order 24, the binary octahedral group $2O$ of order 48 and the binary icosahedral group $2I$ (of order 120), respectively.
Their generators can be found in \cite[Chapter 6.5]{Coxeter}.
We follow the definition given in \cite[Section 3.5]{ConwaySmith}.

The representations of the groups $2T$, $2O$, and $2I$ we use are as follows.
Denote by $Q_8 = \{\pm1,\pm i,\pm j,\pm k\}$ the \emph{quaternion group} of order 8.
For a subgroup $G\subset \mathbb{H}^\times$, we write 
\[xG=\{x\varepsilon \mid \varepsilon \in G\}\]
for the $G$-orbit of $x\in \mathbb{H}$.
Note that $|xG|=|G|$ for $x\in \mathbb{H}^\times $.
With these notations, the exceptional quaternionic unit groups $2T$, $2O$ and $2I$ can be written as follows:
\begin{equation}\label{eq:def_exceptional_groups}
\begin{aligned}
2T&=Q_8 \cup \omega\,Q_8 \cup \omega^2\,Q_8,\\
2O&=2T \cup \alpha\, 2T,\\
2I&=2T \cup \zeta\, 2T\cup \zeta^2\, 2T \cup \zeta^3\, 2T \cup \zeta^4\, 2T,
\end{aligned}
\end{equation}
where $\omega,\alpha,\zeta$ are defined in \eqref{eq:O_basis_elements}.
Note that these elements are units: $\omega^3=1$ and $\alpha^4=\zeta^5=-1$.
The description given in \eqref{eq:def_exceptional_groups} coincides with that in \cite[Section 3.5]{ConwaySmith}, which can be verified using symbolic computations performed with the help of the computer software Mathematica.

For latter purpose, we recall that the multiplication by a quaternion yields an orthogonal transformation.

\begin{lemma}\label{lem:quaternion_orthogonal}
For $x\in \mathbb{H}^\times$, the $\R$-linear map 
\[ m_x:\R^4\rightarrow \R^4, \ y\mapsto \frac{x}{\sqrt{N(x)}} y\] 
is an orthogonal transformation, i.e., $m_x\in O(\R^4)$.
Here we are identifying $\mathbb{H}$ with $\R^4$ as $\R$-vector spaces by taking $(y_1,y_2,y_3,y_4)\in \R^4$ for $y=y_1+y_2i+y_3j+y_4k\in \mathbb{H}$. 
\end{lemma}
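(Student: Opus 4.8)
The plan is to verify directly that $m_x$ preserves the standard inner product on $\R^4\cong \mathbb{H}$. Since the norm $N$ is multiplicative and $N(x/\sqrt{N(x)})=1$, it suffices to treat the case where $x\in \mathbb{H}_1$ is a unit quaternion; the general statement then follows immediately because dividing by $\sqrt{N(x)}$ normalizes $x$ to a unit quaternion. So first I would reduce to showing: for $\varepsilon\in \mathbb{H}_1$, the map $y\mapsto \varepsilon y$ is an orthogonal transformation of $\R^4$.

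Next I would recall the key algebraic fact that the Euclidean inner product on $\R^4$ can be recovered from the quaternionic structure via the formula
\[
\langle y,z\rangle = \frac{1}{2}\bigl(N(y+z)-N(y)-N(z)\bigr) = \frac{1}{2}\bigl(y\bar z + z\bar y\bigr),
\]
where the last expression is a real scalar (the real part of $y\bar z$). I would then compute, for $\varepsilon\in \mathbb{H}_1$,
\[
\langle \varepsilon y,\varepsilon z\rangle = \frac{1}{2}\bigl((\varepsilon y)\overline{(\varepsilon z)} + (\varepsilon z)\overline{(\varepsilon y)}\bigr)
= \frac{1}{2}\bigl(\varepsilon y\,\bar z\,\bar\varepsilon + \varepsilon z\,\bar y\,\bar\varepsilon\bigr)
= \frac{1}{2}\,\varepsilon\bigl(y\bar z + z\bar y\bigr)\bar\varepsilon.
\]
Since $y\bar z + z\bar y$ is a real scalar and real scalars are central in $\mathbb{H}$, this equals $\frac{1}{2}(y\bar z+z\bar y)\,\varepsilon\bar\varepsilon = \frac{1}{2}(y\bar z+z\bar y)\,N(\varepsilon) = \langle y,z\rangle$, using $\varepsilon\bar\varepsilon = N(\varepsilon)=1$. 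This establishes that left multiplication by a unit quaternion preserves the inner product, hence $m_x\in O(\R^4)$.

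Alternatively, and perhaps more economically, one can argue purely from the multiplicativity of the norm: left multiplication by $\varepsilon\in\mathbb{H}_1$ is $\R$-linear and satisfies $N(\varepsilon y)=N(\varepsilon)N(y)=N(y)$, so it preserves the quadratic form $N$, which is exactly the squared Euclidean length on $\R^4$. A linear map preserving a positive definite quadratic form is orthogonal by polarization, so $m_x$ lies in $O(\R^4)$. I would likely present this norm-preservation argument as the main line, since it is the cleanest. The only genuine verification needed is that $m_x$ is indeed $\R$-linear, which is clear because multiplication in $\mathbb{H}$ is $\R$-bilinear and left multiplication by a fixed quaternion is $\R$-linear in the right factor.

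There is essentially no serious obstacle here; the statement is a standard consequence of the fact that $\mathbb{H}$ is a normed (composition) algebra with $N$ multiplicative. The only point requiring minor care is the reduction from general $x\in\mathbb{H}^\times$ to the unit case, which is handled by the normalization factor $1/\sqrt{N(x)}$ built into the definition of $m_x$, together with the identity $N(\lambda y)=\lambda^2 N(y)$ for real $\lambda$. I expect the proof to be short, with multiplicativity of the norm doing all the work.
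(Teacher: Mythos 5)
Your proof is correct, but it takes a genuinely different route from the paper's. The paper argues by explicit computation: using the multiplication rule \eqref{eq:product}, it writes down the concrete $4\times 4$ matrix $M_x$ of \eqref{eq:M_x} satisfying $\frac{1}{\sqrt{N(x)}}xy=yM_x$ for all $y$, and simply observes that $M_x$ is an orthogonal matrix. You instead argue structurally: after normalizing to a unit quaternion $\varepsilon=x/\sqrt{N(x)}$ (a reduction you handle correctly via $N(\lambda y)=\lambda^2 N(y)$ for real $\lambda$), you use either the polarization identity $\langle y,z\rangle=\frac12\bigl(y\bar z+z\bar y\bigr)$ together with $\overline{\varepsilon z}=\bar z\bar\varepsilon$ and the centrality of real scalars, or, more economically, the multiplicativity $N(\varepsilon y)=N(\varepsilon)N(y)$ --- a fact the paper has already recorded in stating that $N:\mathbb{H}^\times\rightarrow\R_{>0}^\times$ is a group homomorphism --- combined with polarization. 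Both of your variants are complete, and every step checks out. What your approach buys: it is coordinate-free, requires no multiplication table, and applies verbatim to right multiplication $y\mapsto y\,x/\sqrt{N(x)}$ and indeed to any composition algebra. What the paper's computation buys: the matrix $M_x$ is not a throwaway byproduct of the proof --- it furnishes the injective homomorphism $\mathbb{H}_1\rightarrow O(4)$, $x\mapsto M_x$, noted immediately after the lemma, and is reused in Section 4, where the $G$-action on polynomials is defined by $(\varepsilon P)(x):=P(xM_\varepsilon)$ and the harmonic Molien series is computed through $\det(I-uM_\varepsilon)$. So if your proof replaced the paper's, the explicit matrix \eqref{eq:M_x} would still need to be introduced separately for later use.
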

\begin{proof}
For $x=x_1+x_2i+x_3j+x_4k\in \mathbb{H}^\times$, let
\begin{equation}\label{eq:M_x}
M_x:=\frac{1}{\sqrt{N(x)}} \begin{pmatrix}x_1&x_2&x_3&x_4 \\ -x_2&x_1&x_4 &-x_3\\ -x_3&-x_4 & x_1 & x_2\\ -x_4 & x_3 & -x_2 & x_1\end{pmatrix}.
\end{equation}
From \eqref{eq:product} we obtain
\[\frac{1}{\sqrt{N(x)}}xy=yM_x ,\quad (\forall y\in \mathbb{H}),\]
where $yM_x$ is the matrix product under the identification $\mathbb{H}$ with $\R^4$.
Since $M_x$ is an orthogonal matrix, we obtain the desired result.
\end{proof}

Note that the map
\[\mathbb{H}_1 \longrightarrow O(4),\ x\longmapsto M_x\]
is an injective group homomorphism, where $O(4)$ is the orthogonal group.

\begin{remark}\label{rem:D_4}
In our previous paper \cite{HiraoNozakiTasaka}, we have defined the $D_4$ root system ${\bf D}_4$ as the set of all permutations of $(\pm1,\pm1,0,0)$.
Using the identification of $\mathbb{H}$ with $\R^4$, one can check that $\frac{1}{\sqrt{2}}{\bf D}_4=\alpha 2T=m_\alpha(2T)$, where $\alpha$ is given in \eqref{eq:O_basis_elements}.
From Lemma \ref{lem:quaternion_orthogonal}, we see that the set $\frac{1}{\sqrt{2}}{\bf D}_4$ is an orthogonal transformation of $2T$.
\end{remark}

\subsection{Spherical design}
The concept of spherical $T$-designs introduced by Delsarte and Seidel \cite{DS89} applies to finite subsets of the unit sphere ${\mathbb S}^{d-1}: = \{ \boldsymbol{x} = (x_1, \ldots, x_d) \in \R^{d} \mid \langle \boldsymbol{x}, \boldsymbol{x} \rangle = 1 \}$ in the $d$-dimensional Euclidean space $\R^d$, where for $\boldsymbol{x},\boldsymbol{y}\in \R^d$, its inner product is denoted by
\[ \langle \boldsymbol{x}, \boldsymbol{y} \rangle := \sum_{a = 1}^{d} x_ay_a.\]
Under the identification $\mathbb{H}$ with $\R^4$, for $x\in \mathbb{H}$ we have $N(x)=\langle \boldsymbol{x}, \boldsymbol{x} \rangle $, and hence, $\mathbb{S}^3=\mathbb{H}_1$.

For an integer $\ell\ge0$, denote by ${\rm Hom}_{\ell}(\R^d)$ the $\R$-vector subspace of $\R[x_1, \ldots, x_d]$ consisting of all homogeneous polynomials of degree $\ell$ in $d$ variables.
We write
\[\Delta:= \frac{\partial^2}{\partial x_1^2}+\cdots +\frac{\partial^2}{\partial x_d^2}\]
for the Laplacian operator.
Let 
\[{\rm Harm}_{\ell}(\R^d) :=\{P\in {\rm Hom}_{\ell}(\R^d)\mid \Delta P=0\}\]
be the $\R$-vector space of harmonic polynomials of homogeneous degree $\ell$.
It is well known (see e.g., Theorem 3.2 in \cite{DelsarteGoethalsSeidel77}) that $\dim_\R {\rm Harm}_\ell(\R^d)= \binom{\ell+d-1 }{\ell} - \binom{\ell+d-3}{\ell -2}$ holds for $\ell,d\in \N$ with $\ell\ge2$.

\begin{definition}\label{def:T-design}
For a subset $T$ of $\N$, a non-empty finite subset $X$ of $\mathbb{S}^{d-1}$ is called a {\itshape spherical $T$-design}
if for any $\ell \in T$ and $P \in {\rm Harm}_\ell (\R^d)$ we have that
\[\sum_{\boldsymbol{x} \in X} P(\boldsymbol{x}) = 0.\]
\end{definition}

Alternatively, a finite subset $X\subset \mathbb{S}^{d-1}$ is a spherical $T$-design if $T\subset T(X)$, where $T(X)$ is the harmonic strength of $X$ defined in \eqref{eq:T(X)}.

For a subset $X$ of $\R^d$, we write $-X:=\{-\boldsymbol{x}\mid \boldsymbol{x}\in X\}$. 
A set $X$ is said to be \emph{antipodal} if we have $-X=X$. 
If a finite subset $X\subset \mathbb{S}^{d-1}$ is antipodal, then the harmonic strength $T(X)$ contains all positive odd integers.
Conversely, for a finite subset $X\subset \mathbb{S}^{d-1}$, if $T(X)$ contains all positive odd integers, then $X$ is antipodal (cf.~\cite{MisawaMunemasaSawa}).

For an antipodal finite subset $X$ of $\R^d$, a subset $X'\subset X$ is called \emph{a half set of $X$} if $X$ is a disjoint union of $X'$ and $-X'$; $X'\sqcup (-X')=X$.
It follows that
\begin{equation}\label{eq:antipodal_cardinality}
|X|=2|X'|.
\end{equation}
For any antipodal finite subset $X$ of $\mathbb{S}^{d-1}$ (note that $\boldsymbol{0}\not\in X$), a half set of $X$ always exists, but not unique.
The harmonic strength of the half set satisfies $T(X') \cap 2\Z = \{2\ell \in \N\mid 2\ell\in T(X)\}$ (see e.g. \cite[Lemma 2.2]{HiraoNozakiTasaka}).
It should be noted that $T(X')$ may contain odd integers (see \cite{BannaiZhaoZhuZhu18}).

\subsection{Computing the harmonic strength}
We now explain our method to compute the harmonic strength defined in \eqref{eq:T(X)}.
We use the Gegenbauer (or ultraspherical) polynomials which form a class of orthogonal polynomials (cf.~\cite[p.302]{AndrewsAskeyRoy} and \cite[(4.7.23)]{S75}).
For $\lambda>0$, the Gegenbauer polynomial $C_\ell^\lambda(s)\in \R[s]$ of degree $\ell$ is defined by 
the coefficient of $u^\ell$ in the formal power series $\Phi^{\lambda}(s;u):=(1-2su+u^2)^{-\lambda}$:
\[
\Phi^{\lambda}(s;u)=\sum_{\ell \geq 0}C_\ell^\lambda(s)u^\ell.
\]
It is also obtained by the recurrence relation $\ell C_\ell^\lambda(s) = 2(\ell+\lambda-1)s C_{\ell-1}^\lambda(s) -(\ell+2\lambda-2)C_{\ell-2}^\lambda(s)$ for $\ell\ge2$ with $C_0^\lambda(s)=1,\ C_1^\lambda(s)=2\lambda s$.
For example, the case $\ell=2$ is $C_2^{\lambda}(s)=-\lambda +2\lambda (1+\lambda) s^2$.
It follows that $C_\ell^\lambda(1) = \frac{2\lambda (2\lambda+1)(2\lambda+2)\cdots (2\lambda+\ell-1)}{\ell!}$.

\begin{lemma}\label{lem:gegenbauer}
Let $X$ be a finite subset of $\mathbb{S}^{d-1}$.
For $\ell\in \N$, we have that
\[\sum_{\boldsymbol{x},\boldsymbol{y}\in X} C_{\ell}^{(d-2)/2}(\langle \boldsymbol{x},\boldsymbol{y}\rangle ) =0\]
if and only if $\ell\in T(X)$, where $T(X)$ is the harmonic strength of $X$ defined in \eqref{eq:T(X)}.
\end{lemma}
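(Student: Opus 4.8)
The plan is to reduce the biconditional to the classical \emph{addition formula} for Gegenbauer polynomials, which turns the double sum into a manifestly nonnegative quantity. Write $\lambda = (d-2)/2$ and set $h=\dim_\R {\rm Harm}_\ell(\R^d)$. First I would fix an orthonormal basis $Y_1,\ldots,Y_h$ of ${\rm Harm}_\ell(\R^d)$ with respect to the $L^2$-inner product on $\mathbb{S}^{d-1}$. The addition formula (see \cite[Theorem 3.2 and its proof]{DelsarteGoethalsSeidel77}, or \cite[p.302]{AndrewsAskeyRoy}) asserts the existence of a constant $c_{\ell,d}>0$ such that
\[
\sum_{k=1}^{h} Y_k(\boldsymbol{x})\,Y_k(\boldsymbol{y}) = c_{\ell,d}\,C_\ell^{\lambda}(\langle \boldsymbol{x},\boldsymbol{y}\rangle), \qquad \boldsymbol{x},\boldsymbol{y}\in \mathbb{S}^{d-1}.
\]
Indeed $c_{\ell,d}$ equals $h/\bigl(|\mathbb{S}^{d-1}|\,C_\ell^{\lambda}(1)\bigr)$, and this is positive since $C_\ell^{\lambda}(1)>0$ by the closed form recorded just before the statement.

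Next I would substitute this identity into the Gegenbauer sum and interchange the order of summation over $k$ and over $(\boldsymbol{x},\boldsymbol{y})\in X\times X$, obtaining
\[
\sum_{\boldsymbol{x},\boldsymbol{y}\in X} C_\ell^{\lambda}(\langle \boldsymbol{x},\boldsymbol{y}\rangle)
= \frac{1}{c_{\ell,d}} \sum_{k=1}^{h} \Bigl(\sum_{\boldsymbol{x}\in X} Y_k(\boldsymbol{x})\Bigr)^{2}.
\]
Because $c_{\ell,d}>0$ and the right-hand side is a sum of squares of real numbers, the left-hand side vanishes if and only if $\sum_{\boldsymbol{x}\in X} Y_k(\boldsymbol{x})=0$ for every $k=1,\ldots,h$.

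It remains to identify this last condition with $\ell\in T(X)$. Since $X\subset \mathbb{S}^{d-1}$, evaluating a homogeneous polynomial on a point of $X$ agrees with evaluating its restriction to the sphere, and $Y_1,\ldots,Y_h$ restrict to a basis of the space of degree-$\ell$ spherical harmonics. By linearity, $\sum_{\boldsymbol{x}\in X} Y_k(\boldsymbol{x})=0$ for all $k$ is therefore equivalent to $\sum_{\boldsymbol{x}\in X} P(\boldsymbol{x})=0$ for every $P\in {\rm Harm}_\ell(\R^d)$, which is precisely the defining condition $\ell\in T(X)$ in \eqref{eq:T(X)}. Chaining the two equivalences completes the proof.

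The substantive step, and hence the only point requiring genuine care, is the addition formula together with the \emph{positivity} of $c_{\ell,d}$: positivity is exactly what promotes the double sum to a sum of squares and thereby yields a biconditional rather than a one-sided implication. This is a classical fact, so the remaining work is to invoke it with the Gegenbauer normalization fixed by the generating function $\Phi^{\lambda}(s;u)$ used in the paper and to confirm that the normalizing constant matches; the precise value of $c_{\ell,d}$ is irrelevant, only its sign. Everything else is a formal manipulation of finite sums.
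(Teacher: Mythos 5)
Your proposal is correct and follows essentially the same route as the paper's own proof: both invoke the addition theorem for an orthonormal basis of ${\rm Harm}_\ell(\R^d)$ to rewrite $\sum_{\boldsymbol{x},\boldsymbol{y}\in X} C_\ell^{(d-2)/2}(\langle \boldsymbol{x},\boldsymbol{y}\rangle)$ as a positive multiple of the sum of squares $\sum_k \bigl(\sum_{\boldsymbol{x}\in X} Y_k(\boldsymbol{x})\bigr)^2$, from which the biconditional is immediate. Your remark that only the sign, not the exact value, of the normalizing constant matters is exactly the point the paper's computation also relies on, so there is nothing to correct.
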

\begin{proof}
Write $c_{\ell,d}:=\dim_\R {\rm Harm}_\ell(\R^d)$.
Let $\{S_{\ell,n}^{(d)}\mid n=1,2,\ldots,c_{\ell,d} \}$ be an orthonormal basis of ${\rm Harm}_\ell(\R^d)$ with respect to the inner product defined for 
$F_1,F_2\in {\rm Harm}_\ell(\R^d)$ by 
\[( F_1,F_2) :=\frac{1}{|\mathbb{S}^{d-1}|} \int_{\boldsymbol{x}\in \mathbb{S}^{d-1}} F_1(\boldsymbol{x})F_2(\boldsymbol{x})d\sigma(\boldsymbol{x}).\]

From the addition theorem \cite[Theorem 9.6.3]{AndrewsAskeyRoy}, for $\ell\in \N$ and $\boldsymbol{x},\boldsymbol{y}\in \mathbb{S}^{d-1}$, one has
\[ \sum_{n=1}^{c_{\ell,d}}S_{\ell,n}^{(d)}(\boldsymbol{x})S_{\ell,n}^{(d)}(\boldsymbol{y}) = \frac{c_{\ell,d}}{|\mathbb{S}^{d-1}|} C_\ell^{(d-2)/2}(\langle \boldsymbol{x},\boldsymbol{y}\rangle).\]
Let $Z_n := \sum_{\boldsymbol{x}\in X}S_{\ell,n}^{(d)}(\boldsymbol{x})\in \R$.
Then we have
\[\sum_{n=1}^{c_{\ell,d}}Z_n^2= \sum_{\boldsymbol{x},\boldsymbol{y}\in X}\sum_{n=1}^{c_{\ell,d}}S_{\ell,n}^{(d)}(\boldsymbol{x})S_{\ell,n}^{(d)}(\boldsymbol{y}) = \frac{c_{\ell,d}}{|\mathbb{S}^{d-1}|}  \sum_{\boldsymbol{x},\boldsymbol{y}\in X} C_{\ell}^{(d-2)/2}(\langle \boldsymbol{x},\boldsymbol{y}\rangle )  .\]
Thus, the result follows from the definition: $\ell \in T(X)$ if and only if $Z_n=0$ holds for all $n=1,2,\ldots, c_{\ell,d}$.
\end{proof}

\begin{proposition} \label{prop:gene}
Let $d\geq 3$.
For a finite subset $X$ of $\mathbb{S}^{d-1}$ and $\ell\in \N$, we have that $\ell \in T(X)$ if and only if 
the coefficient of $u^\ell$ in $\sum_{\boldsymbol{x},\boldsymbol{y} \in X}\Phi^{(d-2)/2}(\langle \boldsymbol{x},\boldsymbol{y} \rangle;u)$ is zero. 
\end{proposition}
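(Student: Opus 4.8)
The plan is to deduce the statement directly from the generating-function definition of the Gegenbauer polynomials together with Lemma~\ref{lem:gegenbauer}. Since $d\geq 3$, the parameter $\lambda=(d-2)/2$ is positive, so the polynomials $C_\ell^{(d-2)/2}(s)$ are defined and the expansion $\Phi^{(d-2)/2}(s;u)=\sum_{\ell\geq 0}C_\ell^{(d-2)/2}(s)u^\ell$ holds by the very definition of $\Phi^\lambda$. This is why the hypothesis $d\geq 3$ appears: it guarantees $\lambda>0$, so that the generating function in the statement is exactly the one attached to the Gegenbauer polynomials used in Lemma~\ref{lem:gegenbauer}.

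First I would substitute $s=\langle \boldsymbol{x},\boldsymbol{y}\rangle$ and sum over all ordered pairs $\boldsymbol{x},\boldsymbol{y}\in X$. Because $X$ is finite, the expression $\sum_{\boldsymbol{x},\boldsymbol{y}\in X}\Phi^{(d-2)/2}(\langle \boldsymbol{x},\boldsymbol{y}\rangle;u)$ is a finite sum of formal power series in $u$, so one may freely interchange the (finite) sum over pairs with the coefficientwise $u$-expansion, obtaining
\[
\sum_{\boldsymbol{x},\boldsymbol{y}\in X}\Phi^{(d-2)/2}(\langle \boldsymbol{x},\boldsymbol{y}\rangle;u)
=\sum_{\ell\geq 0}\Bigl(\sum_{\boldsymbol{x},\boldsymbol{y}\in X}C_\ell^{(d-2)/2}(\langle \boldsymbol{x},\boldsymbol{y}\rangle)\Bigr)u^\ell.
\]
In particular, the coefficient of $u^\ell$ on the left-hand side equals $\sum_{\boldsymbol{x},\boldsymbol{y}\in X}C_\ell^{(d-2)/2}(\langle \boldsymbol{x},\boldsymbol{y}\rangle)$.

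Finally I would invoke Lemma~\ref{lem:gegenbauer}, which asserts precisely that this coefficient vanishes if and only if $\ell\in T(X)$. Chaining the two equivalences yields the claim of the proposition. There is no genuine obstacle here: the only point requiring any care is the legitimacy of interchanging the two summations, and this is immediate from the finiteness of $X$, so the argument reduces to recording the generating-function identity and citing the preceding lemma.
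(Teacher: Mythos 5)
Your proof is correct and is essentially identical to the paper's own argument: expand each $\Phi^{(d-2)/2}(\langle \boldsymbol{x},\boldsymbol{y}\rangle;u)$ via the Gegenbauer generating function, swap the finite sum over pairs with the $u$-expansion, and apply Lemma~\ref{lem:gegenbauer} to the coefficient of $u^\ell$. Your added remark that $d\geq 3$ guarantees $\lambda=(d-2)/2>0$ is a fine (if implicit in the paper) clarification, but the route is the same.
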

\begin{proof}
    Since 
\begin{align*}
\sum_{\boldsymbol{x},\boldsymbol{y} \in X}\Phi^{(d-2)/2}(\langle \boldsymbol{x},\boldsymbol{y} \rangle;u)
&=
\sum_{\boldsymbol{x},\boldsymbol{y} \in X} \sum_{\ell\geq 0} C_\ell^{(d-2)/2}(\langle \boldsymbol{x},\boldsymbol{y}  \rangle) u^\ell   \\
&=
 \sum_{\ell\geq 0} \left(\sum_{\boldsymbol{x},\boldsymbol{y} \in X}C_\ell^{(d-2)/2}(\langle \boldsymbol{x}, \boldsymbol{y} \rangle)\right) u^\ell, 
\end{align*}
the result follows from Lemma \ref{lem:gegenbauer}.
\end{proof}

The power series $\sum_{\boldsymbol{x},\boldsymbol{y} \in X}\Phi^{(d-2)/2}(\langle \boldsymbol{x},\boldsymbol{y} \rangle;u)\in \R[[u]]$ as in Proposition \ref{prop:gene} can be computed from the \emph{distance distribution} $R_X:=\{(s, A_s(X))\mid s \in A(X)\cup\{1\}\}$ of $X$, where for $s\in \R$, we write
\begin{equation}\label{eq:d_a}
A_s(X):=|\{(\boldsymbol{x},\boldsymbol{y})\in X\times X\mid \langle \boldsymbol{x},\boldsymbol{y}\rangle=s\}|
\end{equation}
and 
\begin{equation}\label{eq:A(X)}
A(X):=\{\langle \boldsymbol{x},\boldsymbol{y}\rangle \mid \boldsymbol{x},\boldsymbol{y}\in X, \ \boldsymbol{x}\neq\boldsymbol{y}\}
\end{equation}
denotes the set of inner products of two distinct elements in $X\subset \mathbb{S}^{d-1}$.
Note that $A_1(X)=|X|$.
With this, one has
\begin{equation*}\label{eq:Phi_A(X)}
\sum_{\boldsymbol{x},\boldsymbol{y} \in X} \Phi^{(d-2)/2}(\langle \boldsymbol{x},\boldsymbol{y} \rangle;u)=\sum_{s\in A(X)\cup\{1\}}A_s(X) \Phi^{(d-2)/2}(s;u).
\end{equation*}
Our strategy is to express the above right-hand side in terms of simple rational functions in $u$ and check the conditions under which the coefficient of $u^\ell$ vanishes.

Below, we only use the case $d=4$. In this case, we simply write
\[\Phi(s;u):=\Phi^{1}(s;u)=\frac{1}{1-2su+u^2}.\]

\section{Proofs of main results}
\subsection{Proof of Theorem \ref{thm:harmonic_strength}}
To prove Theorem \ref{thm:harmonic_strength}, we use the theory of invariant polynomials.
Let ${\rm Hom}_{\ell}(\C^2)$ denote the $\C$-vector subspace of $\C[z_1, z_2]$ consisting of all homogeneous polynomials of degree $\ell$ in two variables.
Let us recall an action of the group $\mathbb{H}_1$ on ${\rm Hom}_\ell (\C^2)$.

We first note that a quaternion $x \in \mathbb{H}$ can be written as $x=z_1+z_2j $ for some $z_1,z_2\in \C$:
\[x = x_1 + x_2 i + x_3 j + x_4 k = (x_1 + x_2 i) + (x_3 + x_4 i) j.\]
For $x=z_1+z_2j\in \mathbb{H}$, we set 
\[ C_x := \begin{pmatrix} z_1&z_2\\ -\overline{z_2}&\overline{z_1}\end{pmatrix},\]
where $\overline{z}$ denotes the complex conjugate for $z\in \C$.
The matrix $C_x$ lies in the special unitary group $SU(2)=\{A\in U(2)\mid \det A=1\}$ if $x\in \mathbb{H}_1$.
Moreover, the map
\begin{equation}\label{eq:map_C_x}
\mathbb{H}_1\longrightarrow SU(2),\ \varepsilon\longmapsto C_{\varepsilon}
\end{equation}
is an injective group homomorphism.
This shows that for $\varepsilon\in \mathbb{H}_1$ and $ P\in {\rm Hom}_{\ell}(\C^2)$, setting
\[ (\varepsilon P)(z_1,z_2):=P\big( (z_1,z_2)C_\varepsilon\big)\]
we obtain an action of $\mathbb{H}_1$ on the space ${\rm Hom}_{\ell}(\C^2)$.

Let $G$ be a finite subgroup of $\mathbb{H}_1$, which can be viewed as a subgroup of $SU(2)$ via \eqref{eq:map_C_x}.
Denote by ${\rm Hom}_{\ell}(\C^2)^G=\{P\in {\rm Hom}_{\ell}(\C^2)\mid  \varepsilon P=P, \ \forall \varepsilon \in G\}$ the $\C$-vector space of $G$-invariant homogeneous polynomials, and by
\[\Psi_G(u):=\sum_{\ell\geq 0} \dim_\C {\rm Hom}_\ell (\C^2)^G u^\ell\]
the \emph{Molien series} (or Hilbert--Poincar\'{e} series) of $G$.
It is well known (cf.~\cite[Theorem 4.3.2]{Smith}) that the Molien series $\Psi_G(u)$ can be calculated by
\[  \Psi_G(u)= \frac{1}{|G|} \sum_{\varepsilon \in G} \frac{1}{\det (I-uC_\varepsilon)}.\]

We now prove that, for a finite subgroup $G$ of $\mathbb{H}_1$, the formal power series $(1/|G|^2)\sum_{x,y \in G} \Phi(\langle x,y \rangle;u)$ coincides with the Molien series $\Psi_G(u)$ of $G$. 

\begin{lemma}\label{lem:SU(2)}
Let $G$ be a finite subgroup of $\mathbb{H}_1$. Then we have  
\[\Psi_G(u)= \frac{1}{|G|^2}\sum_{x,y \in G} \Phi(\langle x,y \rangle; u),\]
where the inner product $\langle x,y \rangle$ is given by $\langle x_1+x_2 i+x_3 j +x_4 k,y_1+y_2i+y_3 j+y_4 k \rangle=x_1y_1+x_2y_2+x_3y_3+x_4y_4$ via the identification $\mathbb{H}$ with $\R^4$.
\end{lemma}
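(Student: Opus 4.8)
The plan is to evaluate both sides explicitly and then collapse the double sum over $G\times G$ to a single sum by exploiting the left-translation invariance of $G$.

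First I would compute the factor $\det(I-uC_\varepsilon)$ appearing in the Molien series directly from the matrix form of $C_\varepsilon$. Writing $\varepsilon=z_1+z_2 j$ with $z_1=\varepsilon_1+\varepsilon_2 i$ and $z_2=\varepsilon_3+\varepsilon_4 i$, expansion of the $2\times2$ determinant gives
\[
\det(I-uC_\varepsilon)=1-u(z_1+\overline{z_1})+u^2\big(|z_1|^2+|z_2|^2\big).
\]
Since $\varepsilon\in\mathbb{H}_1$ we have $|z_1|^2+|z_2|^2=N(\varepsilon)=1$ and $z_1+\overline{z_1}=2\varepsilon_1$, so $\det(I-uC_\varepsilon)=1-2\varepsilon_1 u+u^2$, whose reciprocal is exactly $\Phi(\varepsilon_1;u)$. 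Observing that $\varepsilon_1=\langle\varepsilon,1\rangle$, the inner product with the identity quaternion, this already yields
\[
\Psi_G(u)=\frac{1}{|G|}\sum_{\varepsilon\in G}\Phi(\langle\varepsilon,1\rangle;u).
\]

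Next I would relate this single sum to the double sum. The key identity is $\langle x,y\rangle=\operatorname{Re}(x\bar y)$ for $x,y\in\mathbb{H}$, combined with the invariance of the real part under cyclic rearrangement, $\operatorname{Re}(x\bar y)=\operatorname{Re}(\bar y x)$. For $y\in\mathbb{H}_1$ we have $\bar y=y^{-1}$, so $\langle x,y\rangle$ is the coefficient of $1$ in $y^{-1}x$, that is $\langle x,y\rangle=\langle y^{-1}x,1\rangle$, whence $\Phi(\langle x,y\rangle;u)=\Phi(\langle y^{-1}x,1\rangle;u)$. For each fixed $y\in G$ the map $x\mapsto y^{-1}x$ is a bijection of $G$, so
\[
\sum_{x\in G}\Phi(\langle y^{-1}x,1\rangle;u)=\sum_{z\in G}\Phi(\langle z,1\rangle;u)=|G|\,\Psi_G(u),
\]
independently of $y$. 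Summing over the $|G|$ choices of $y$ gives $\sum_{x,y\in G}\Phi(\langle x,y\rangle;u)=|G|^2\Psi_G(u)$, which is the claim after dividing by $|G|^2$.

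All computations here are elementary; the determinant evaluation is routine (it only uses $N(\varepsilon)=1$), so the main conceptual step is the passage $\langle x,y\rangle=\langle y^{-1}x,1\rangle$. This is where the quaternionic structure does the real work, converting the two-variable orbit sum into a genuine group-translation argument and thereby matching it to the Molien-type single sum.
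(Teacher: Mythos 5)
Your proof is correct and follows essentially the same route as the paper: both collapse the double sum to a single sum via the identity $\langle x,y\rangle=\langle y^{-1}x,1\rangle$ (the paper derives it from the orthogonality of quaternion multiplication, you from $\operatorname{Re}(x\bar y)=\operatorname{Re}(\bar y x)$ and $\bar y=y^{-1}$, which is the same mechanism) and then match the result with Molien's formula through the computation $\det(I-uC_\varepsilon)=1-2\varepsilon_1 u+u^2$. Your write-up is slightly more explicit about the determinant and the translation bijection, but there is no substantive difference.
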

\begin{proof}
From Lemma \ref{lem:quaternion_orthogonal}, for $x,y\in \mathbb{H}_1$, we have $\langle x,y\rangle = \langle 1,x^{-1}y\rangle$.
Thus, it follows that 
\begin{align*}
\frac{1}{|G|^2}\sum_{x,y \in G} \Phi(\langle x,y \rangle; u)&=
\frac{1}{|G|}\sum_{x \in G} \frac{1}{1-2\langle 1,x \rangle u+u^2}    \\
&=\frac{1}{|G|}\sum_{x\in G} \frac{1}{1-2x_1 u+u^2} \\
&= \frac{1}{|G|}\sum_{x \in G}\frac{1}{\det (I-u C_x)},
\end{align*}
where $\det (I-u C_x)=1-2x_1 u+u^2$ is easily verified. 
\end{proof}

For a finite subgroup $G\subset \mathbb{H}_1$ viewed as a finite subgroup of $SU(2)$, the Molien series $\Psi_G(u)$ is explicitly calculated as follows \cite[Section 4.5]{Sbook}: 
\begin{table}[h]
    \centering
    \caption{Molien series of $G$}
    \label{tab:molien}
    \begin{tabular}{|c|c|} \hline
    $G$ & $\Psi_G(u)$
    \\ \hline \hline 
       $C_n$  &  $\displaystyle{\frac{1-u^{2n}}{(1-u^2)(1-u^n)^2}=\frac{1+u^{n}}{(1-u^2)(1-u^n)}}$\\ \hline 
       $D_{2n}$ &$\displaystyle{\frac{1-u^{4n+4}}{(1-u^4)(1-u^{2n})(1-u^{2n+2})}=\frac{1+u^{2n+2}}{(1-u^4)(1-u^{2n})}}$  \\ \hline
       $2T$ & $\displaystyle{\frac{1-u^{24}}{(1-u^6)(1-u^8)(1-u^{12})}=\frac{1+u^{12}}{(1-u^6)(1-u^8)}}$\\ \hline 
       $2O$ & $\displaystyle{\frac{1-u^{36}}{(1-u^8)(1-u^{12})(1-u^{18})}=\frac{1+u^{18}}{(1-u^8)(1-u^{12})}}$\\ \hline 
       $2I$ & $\displaystyle{\frac{1-u^{60}}{(1-u^{12})(1-u^{20})(1-u^{30})}=\frac{1+u^{30}}{(1-u^{12})(1-u^{20})}}$\\ \hline 
    \end{tabular}
\end{table}

\noindent
Here $C_n$ and $D_{2n}$ are the cyclic and dihedral group of order $n$ and $4n$, respectively.
A quaternionic representation is given as follows:
\[C_{n}:=\{e^{2m\pi i/n}\mid m=0,1,\ldots, n-1\}, \quad D_{2n}:=C_{2n} \cup C_{2n}',\]
where we set $e^{i\theta}=\cos \theta + i \sin \theta$ and $C_{2n}':=\{e^{m\pi i/n}j \mid m=0,1\ldots, 2n -1\}$. 

\begin{proof}[Proof of Theorem \ref{thm:harmonic_strength}]
       We compute the generating series in Proposition \ref{prop:gene}.
       As a prototype, we give the proof for the case $X=2T$ (Theorem \ref{thm:harmonic_strength} (i)); the proofs for the other cases are similar.

    From Lemma \ref{lem:SU(2)} and Table \ref{tab:molien}, one has
    \begin{align*}
        \frac{1}{|2T|^2}\sum_{x,y \in 2T} \Phi(\langle x, y \rangle;u)&=\frac{1+u^{12}}{(1-u^6) (1-u^8)}.
    \end{align*}
    
    We now determine where the coefficient of $u^\ell$ in the above power series is zero.
    Let $B_{\ell}=\{(a,b)\in \Z_{\ge0}^2\mid 3a+4b=\ell\}$.
    Then we have
    \[ \frac{1}{(1-u^6)(1-u^8)} = \sum_{a,b\ge0}u^{6a+8b}=\sum_{\ell\ge0}|B_\ell|u^{2\ell}.\]
    We will show that $|B_\ell| =0$ if and only if $\ell=1,2,5$, 
    from which by Proposition \ref{prop:gene} the result 
    $T(2T) = \{ 10,4,2\}\cup \mathbb{O}^{+}$ 
    follows.
    For $\ell\in \N$, suppose $|B_\ell|>0$. Then there exists $(a,b)\in  \Z_{\ge0}^2$ such that $\ell= 3a+4b=3(a+b)+b$.
    Therefore, $|B_\ell|>0$ holds if and only if $\ell \in S_0\cup S_1\cup S_2$ holds, where, for $b\in\{0,1,2\}$, we let $S_b=\{3a+b\in \Z \mid a\ge b \}$.
    Since $\N\setminus \big(S_0\cup S_1\cup S_2\big)=\{1,2,5\}$, we have done.

 By the same argument as for $X=2T$, we can find all degree $\ell$ at which the coefficient of $u^\ell$ becomes zero in the power series given in Table \ref{tab:molien}.
    The detailed verification is left to the reader. 
\end{proof}

The harmonic strengths of the groups $C_n$ and $D_{2n}$ can also be obtained from Lemma~\ref{lem:SU(2)} and Table~\ref{tab:molien}.
We state the results without providing proofs.

\begin{theorem}\label{thm:harmonic_strength_Dih}
The harmonic strengths of the cyclic group $C_n$ and the dihedral group $D_{2n}$ are given by
\[
T(C_n)=\mathbb{O}^+ \qquad (n\geq 2),  
\]
and 
\[T(D_{2n}) =\{2\ell \in \N \mid 0<\ell < n, \ \ell \equiv 1\bmod 2  \} \cup 
\mathbb{O}^{+}\qquad  (n\geq 2). 
\]
\end{theorem}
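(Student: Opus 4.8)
The plan is to reduce the determination of $T(C_n)$ and $T(D_{2n})$ to reading off the support of the two explicit rational generating functions in Table~\ref{tab:molien}. Both $C_n$ and $D_{2n}$ are finite subgroups of $\mathbb{H}_1$, so Lemma~\ref{lem:SU(2)} gives $\frac{1}{|G|^2}\sum_{x,y\in G}\Phi(\langle x,y\rangle;u)=\Psi_G(u)$ for $G\in\{C_n,D_{2n}\}$. Since $d=4$ here we have $\Phi^{(d-2)/2}=\Phi$, and multiplying by the positive constant $|G|^{2}$ does not change which coefficients vanish; hence Proposition~\ref{prop:gene} yields the clean criterion
\[
\ell\in T(G)\iff [u^\ell]\,\Psi_G(u)=0 .
\]
Everything then becomes a combinatorial count of the coefficients of the series listed in Table~\ref{tab:molien}.

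For $C_n$ I would expand $\Psi_{C_n}(u)=\frac{1+u^n}{(1-u^2)(1-u^n)}=\big(\sum_{a\ge0}u^{2a}\big)\big(1+2\sum_{b\ge1}u^{nb}\big)$, so that $[u^\ell]\Psi_{C_n}(u)$ counts the representations $\ell=2a+nb$ with $a,b\ge0$, those with $b\ge1$ being weighted by $2$. The coefficient vanishes exactly when $\ell$ admits no such representation. An even $\ell$ always has the solution $b=0,\ a=\ell/2$, so no even degree lies in $T(C_n)$; for odd $\ell$ a representation forces $nb$ to be odd. This is precisely where the parity of $n$ enters and is the point to handle with care: when $C_n$ is antipodal, i.e.\ $-1\in C_n$ (so $n$ is even), every $nb$ is even, no odd $\ell$ is representable, and one obtains $T(C_n)=\mathbb{O}^+$.

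For $D_{2n}$ I would substitute $v=u^2$ to write $\Psi_{D_{2n}}(u)=\frac{1+v^{n+1}}{(1-v^2)(1-v^n)}$ with $v=u^2$. Only even degrees $\ell=2r$ can possibly be constrained, since all odd-degree coefficients vanish automatically; this reflects that $D_{2n}$ always contains $-1$ and is antipodal, so $\mathbb{O}^+\subset T(D_{2n})$. The coefficient $[v^r]$ is governed by two counts: the representations $r=2a+nb$ coming from $1/((1-v^2)(1-v^n))$, and the shifted representations $r-(n+1)=2a+nb$ coming from the numerator $v^{n+1}$. An even $r$ is always representable (take $b=0$), so those degrees are excluded; for odd $r$ the first count becomes nonempty only around $r=n$ and the shifted count only around $r=2n+1$, so the coefficient vanishes precisely for odd $r$ with $0<r<n$. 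Translating back through $\ell=2r$ gives the even part $\{2\ell:0<\ell<n,\ \ell\equiv1\bmod 2\}$, which together with $\mathbb{O}^+$ is the asserted $T(D_{2n})$.

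The method is otherwise mechanical, and the main obstacle is purely the parity bookkeeping: one must track exactly when $2a+nb$ (and its shift by $n+1$) can meet a prescribed parity and size, while keeping the factor-of-two weighting from the numerators $1+u^n$ and $1+v^{n+1}$ straight. That weighting only inflates already-positive coefficients, so it never affects the vanishing locus, but it must be accounted for. The delicate boundary cases are $r=n$ (resp.\ $r=2n+1$) for $D_{2n}$, and the role of the parity of $n$ for $C_n$; handling these thresholds cleanly is the crux of the argument.
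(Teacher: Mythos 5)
Your reduction $\ell\in T(G)\iff[u^\ell]\Psi_G(u)=0$, via Lemma~\ref{lem:SU(2)} and Proposition~\ref{prop:gene}, is exactly the route the paper indicates (it states this theorem without proof, pointing to Lemma~\ref{lem:SU(2)} and Table~\ref{tab:molien}), and your expansions of the two Molien series are sound. But for $C_n$ your proof covers only the case you flag, namely $n$ even, and your hesitation there is pointing at a real problem: for odd $n$ the statement as printed is false, and your own expansion shows it. Indeed, for $n$ odd and any odd $\ell\ge n$, the choice $b=1$, $a=(\ell-n)/2$ gives a representation $\ell=2a+nb$, so $[u^\ell]\Psi_{C_n}(u)>0$ and $\ell\notin T(C_n)$; one gets $T(C_n)=\{\ell\in\mathbb{O}^+\mid \ell<n\}$, e.g.\ $T(C_3)=\{1\}$. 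This is also forced by the antipodality criterion the paper cites (\cite{MisawaMunemasaSawa}): $\mathbb{O}^+\subset T(X)$ implies $X$ antipodal, and $C_n$ with $n$ odd does not contain $-1$. So your argument cannot be completed to a proof of the literal statement; a correct write-up must either restrict the $C_n$ claim to even $n$ or record the corrected value $\{\ell\in\mathbb{O}^+\mid\ell<n\}$ for odd $n$. Stopping at ``when $C_n$ is antipodal\dots'' leaves the stated theorem unproved (because unprovable) for odd $n$, and this should be said explicitly rather than elided.

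In the $D_{2n}$ part your final answer is correct, but the quantitative claims in the middle are wrong for one parity class. You assert that for odd $r$ the count of representations $r=2a+nb$ ``becomes nonempty only around $r=n$'' and the shifted count ``only around $r=2n+1$''; this is accurate only for $n$ odd. For $n$ even, $2a+nb$ is always even, so the first count is empty for \emph{every} odd $r$, while the shift $n+1$ is then odd, so $r-(n+1)$ is even and the shifted count already contributes from $r=n+1$ on, not from $r=2n+1$. The two parity cases happen to give the same vanishing locus --- odd $r$ with $0<r<n$ (for $n$ odd: $N(r)>0$ iff $r\ge n$ and the shift kicks in at $r\ge 2n+1$; for $n$ even: the shift alone kicks in at $r\ge n+1$, and an odd $r\le n$ is automatically $<n$) --- so the formula $T(D_{2n})=\{2\ell\mid 0<\ell<n,\ \ell\ \text{odd}\}\cup\mathbb{O}^+$ survives, but the case split by the parity of $n$ must actually be carried out; as written, your thresholds are incorrect for even $n$ even though the conclusion is unchanged.
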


For subgroups of $\mathbb{H}_1$, the formal power series $\sum_{x,y \in X} \Phi(\langle x,y \rangle; u)$ can be written explicitly as shown in Lemma~\ref{lem:SU(2)}.
In general, the crucial point in this approach is that the series $\sum_{x,y \in X} \Phi(\langle x,y \rangle; u)$ can be rewritten in a sufficiently simple form to determine the non-vanishing of its coefficients.
However, finding such a simple form is generally difficult, as the coefficients of this series may not be integers.

\subsection{Proof of Theorem \ref{thm:LP_bound}}
In order to give a lower bound on the cardinality of a spherical $T$-design, we recall a useful inequality due to Delsarte--Goethals--Seidel \cite{DelsarteGoethalsSeidel77}.

Let 
\[ Q_\ell^{(d)}(s):=\frac{d+2\ell-2}{d-2} C_{\ell}^{(d-2)/2}(s)\]
be the scaled Gegenbauer polynomial used in \cite{DelsarteGoethalsSeidel77}.
With this normalization, we have $Q_\ell^{(d)}(1)=\dim_\R {\rm Harm}_\ell(\R^d)$.
As examples, one has
\begin{equation}\label{eq:Q_l^{d}}
\begin{aligned}
Q_2^{(d)}(s) &=\frac{1}{2} (d+2) \left(d s^2-1\right),\quad Q_3^{(d)}(s) =\frac{1}{6} d (d+4) s \left((d+2) s^2-3\right),\\
Q_4^{(4)}(s) &=5\left( 16 s^4-12 s^2+1\right),\quad Q_6^{(4)}(s) =7 \left(64 s^6-80 s^4+24 s^2-1\right).
\end{aligned}
\end{equation}

Hereafter, we fix $d=4$.
Recall that the polynomials $Q_\ell(s)=Q_\ell^{(4)}(s)$ form a basis of the polynomial algebra $\R[s]$, and every polynomial $F(s)\in \R[s]$ of degree $r$ can be written uniquely in the form
\begin{equation}
\label{eq:F}
F(s) = \sum_{\ell = 0}^{r} f_\ell Q_{\ell} (s),
\end{equation}
called the \emph{Gegenbauer expansion} of $F(s)$.
Note that, since the polynomials $Q_\ell(s)$ are the orthogonal polynomials on the closed interval $[-1, 1]$ with respect to the inner product of the weight function $(1 - s^2)^{1/2}$, the coefficient $f_\ell\in \R$ given in \eqref{eq:F} can be computed from
\[
f_\ell = \frac{ \int_{-1}^{1} F(s) Q_{ \ell} (s) (1 - s^2)^{\frac12} \; ds}{ \int_{-1}^{1} Q_\ell(s)^2(1 - s^2)^{\frac12} \; ds}.
\]

\begin{lemma}\label{lem:LP_bound}
Let $X\subset \mathbb{S}^3$ be a spherical $T$-design and $F(s)\in \R[s]$ of degree $r$.
Suppose that the Gegenbauer expansion \eqref{eq:F} of $F(s)$ satisfies 
 $f_\ell \leq 0 $
 for all $\ell \not\in T$ (we call $F(s)$ a test function).
Then we have the inequality
\[ f_0|X|^2-F(1)|X| \ge \sum_{s\in A(X)} F(s)A_s(X),\]
where $A_s(X)$ and $A(X)$ are defined in \eqref{eq:d_a} and \eqref{eq:A(X)}, respectively. 
\end{lemma}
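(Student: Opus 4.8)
The plan is to compute the double sum $\sum_{\boldsymbol{x},\boldsymbol{y}\in X} F(\langle \boldsymbol{x},\boldsymbol{y}\rangle)$ in two different ways and compare the results. On the one hand, I would substitute the Gegenbauer expansion \eqref{eq:F} to get
\[
\sum_{\boldsymbol{x},\boldsymbol{y}\in X} F(\langle \boldsymbol{x},\boldsymbol{y}\rangle) = \sum_{\ell=0}^{r} f_\ell \sum_{\boldsymbol{x},\boldsymbol{y}\in X} Q_\ell(\langle \boldsymbol{x},\boldsymbol{y}\rangle).
\]
The key structural input is the positive semidefiniteness already packaged in the proof of Lemma \ref{lem:gegenbauer}: the identity $\sum_{n} Z_n^2 = \frac{c_{\ell,4}}{|\mathbb{S}^3|}\sum_{\boldsymbol{x},\boldsymbol{y}\in X} C_\ell^{1}(\langle \boldsymbol{x},\boldsymbol{y}\rangle)$ exhibits the right-hand sum as a sum of squares, hence nonnegative, vanishing exactly when $\ell\in T(X)$. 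Since for $d=4$ we have $Q_\ell(s)=(\ell+1)C_\ell^{1}(s)$ with a positive scalar, the same nonnegativity holds with $C_\ell^{1}$ replaced by $Q_\ell$; moreover $Q_0\equiv 1$ gives $\sum_{\boldsymbol{x},\boldsymbol{y}\in X} Q_0(\langle \boldsymbol{x},\boldsymbol{y}\rangle)=|X|^2$.

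Next I would exploit the design hypothesis together with the sign condition on the $f_\ell$. Because $X$ is a spherical $T$-design we have $T\subset T(X)$, so every term with $\ell\in T$ (and $\ell\ge 1$) vanishes. For the remaining indices $\ell\ge 1$ with $\ell\notin T$, the assumption $f_\ell\le 0$ multiplied by the nonnegative quantity $\sum_{\boldsymbol{x},\boldsymbol{y}\in X} Q_\ell(\langle \boldsymbol{x},\boldsymbol{y}\rangle)$ yields a nonpositive contribution. Keeping only the $\ell=0$ term, this gives the upper bound
\[
\sum_{\boldsymbol{x},\boldsymbol{y}\in X} F(\langle \boldsymbol{x},\boldsymbol{y}\rangle) \le f_0 |X|^2.
\]

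Finally I would evaluate the same double sum through the distance distribution. Separating the diagonal pairs $\boldsymbol{x}=\boldsymbol{y}$, for which $\langle \boldsymbol{x},\boldsymbol{x}\rangle=1$ and which contribute $F(1)|X|$, from the off-diagonal pairs grouped by their inner product $s\in A(X)$, one obtains
\[
\sum_{\boldsymbol{x},\boldsymbol{y}\in X} F(\langle \boldsymbol{x},\boldsymbol{y}\rangle) = F(1)|X| + \sum_{s\in A(X)} F(s) A_s(X).
\]
Comparing with the upper bound and rearranging gives the asserted inequality. This is the classical Delsarte--Goethals--Seidel linear programming bound, so no serious obstacle arises once the positive semidefiniteness is in hand; the only points demanding care are the bookkeeping of signs and equality cases---checking that $Q_\ell$ is a positive multiple of $C_\ell^{1}$ so that positivity is preserved, that $1\notin A(X)$ so the diagonal splits off cleanly, and that the hypothesis $f_\ell\le 0$ is invoked only for $\ell\ge 1$ outside $T$, leaving $f_0|X|^2$ as the leading term.
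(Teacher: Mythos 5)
Your proof is correct, but it is worth noting that the paper does not actually write out an argument at this point: its proof of Lemma~\ref{lem:LP_bound} is a one-line citation of \cite[Lemma 3.1]{HiraoNozakiTasaka} (see also \cite[Corollary 3.8]{DelsarteGoethalsSeidel77}), and what you have reconstructed is precisely the standard Delsarte--Goethals--Seidel double-counting argument underlying that citation. All the delicate points are handled properly: the positive semidefiniteness $\sum_{\boldsymbol{x},\boldsymbol{y}\in X} Q_\ell(\langle \boldsymbol{x},\boldsymbol{y}\rangle)\ge 0$ does follow from the sum-of-squares identity in the proof of Lemma~\ref{lem:gegenbauer}, since for $d=4$ one has $Q_\ell(s)=\frac{d+2\ell-2}{d-2}C_\ell^{(d-2)/2}(s)=(\ell+1)\,C_\ell^{1}(s)$ with positive factor $\ell+1$; the design hypothesis $T\subset T(X)$ kills the terms with $\ell\in T$ by the equivalence in that same lemma; the terms with $\ell\ge 1$, $\ell\notin T$ are nonpositive by the sign hypothesis on $f_\ell$; the constant term is kept exactly as $f_0|X|^2$ via $Q_0\equiv 1$, which is the right reading of the lemma since $0\notin T\subset\N$ and the inequality needs no sign assumption at $\ell=0$; and the diagonal split is clean because $\langle\boldsymbol{x},\boldsymbol{y}\rangle=1$ forces $\boldsymbol{x}=\boldsymbol{y}$ on $\mathbb{S}^3$, so $1\notin A(X)$ and $A_1(X)=|X|$. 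The only difference between the two routes is economy versus self-containedness: the paper saves space by deferring to prior work, while your version makes the lemma independent of external references at essentially no cost, since Lemma~\ref{lem:gegenbauer} is already available in the paper.
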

\begin{proof}
This is an immediate consequence of \cite[Lemma 3.1]{HiraoNozakiTasaka} (see also \cite[Corollary 3.8]{DelsarteGoethalsSeidel77}). 
\end{proof}

We are now in a position to show Theorem \ref{thm:LP_bound}.

\begin{proof}[Proof of Theorem \ref{thm:LP_bound}]
(i) This was shown in \cite[Theorem 3.2]{HiraoNozakiTasaka}, but we repeat the proof.
Let $X'$ be a half set of an antipodal spherical $\{10,4,2\}$-design $X$ of $\mathbb{S}^3$.
From \eqref{eq:antipodal_cardinality} it suffices to show that $|X'|\ge12$.
Let 
\[ F_{2T}(s):= \frac{1}{11264} Q_{10}(s) + \frac{1}{2560} Q_{4} (s) + 
\frac{1}{768} Q_{2} (s) + \frac{3}{1024}.\]
Applying this to Lemma \ref{lem:LP_bound}, we obtain
\begin{equation}\label{eq:F_{2T}_ineq} 
 \frac{3}{1024} |X'|^2 - F_{2T}(1)|X'| \ge \sum_{s\in A(X')} F_{2T}(s)A_s(X').
\end{equation}
Note that the polynomial $F_{2T}$ has the following factorization:
\begin{equation}\label{eq:F_{2T}}
F_{2T}(s)= s^2 \left(s + \frac{1}{2} \right)^2 \left(s - \frac{1}{2} \right)^2 \left( \left(s^2 - \frac78\right)^2 + \frac34\right).
\end{equation}
It follows from \eqref{eq:F_{2T}} that $F_{2T}(s)\ge0$ for any $s\in [-1,1]$.
Since $A(X')\subset (-1,1)$, we obtain the inequality $\sum_{s\in A(X')} F_{2T}(s)A_s(X')\ge0$, and hence,
\[ |X'| \ge \frac{1024}{3} F_{2T}(1)=12.\]
 
(ii) Let $X'$ be a half set of an antipodal spherical $\{14,10,6,4,2\}$-design $X$ of $\mathbb{S}^3$.
Consider
\begin{align*}
F_{2O}(s)&:= \frac{1}{245760} Q_{14}(s) + \frac{1}{135168} Q_{10}(s) + \frac{1}{114688} Q_6(s) + \frac{1}{49152} Q_4(s) 
   + \frac{1}{147456 }Q_2(s) + \frac{1}{8192}.
\end{align*}
With this, Lemma \ref{lem:LP_bound} gives the inequality $ \frac{1}{8192} |X'|^2 - F_{2O}(1)|X'| \ge \sum_{s\in A(X')} F_{2O}(s)A_s(X')$.
From the another expression
\begin{equation}\label{eq:F_{2O}}
F_{2O}(s)=s^2\left( s+\frac12\right)^2  \left( s-\frac12\right)^2 \left(  s^2-\frac12\right)^2 \left(\left(s^2- \frac78 \right)^2 +\frac14 \right),
\end{equation}
we see that $F_{2O}(s)\ge0$ for any $s\in [-1,1]$.
Therefore, one gets the desired inequality 
\[ |X'| \ge 8192 F_{2O}(1)= 24.\]

(iii) 
Let $X'$ be a half set of an antipodal spherical $\{10,8,6,4,2\}$-design $X$ of $\mathbb{S}^3$.
Applying 
\begin{equation}\label{eq:F_{2I}}
\begin{aligned}
F_{2I}(s) &:= 
- \frac{1}{1114112} Q_{16}(s)
- \frac{11}{4915200} Q_{14}(s)
+ \frac{21}{1802240} Q_{10}(s)
+ \frac{17}{491520} Q_{8}(x) \\
&\quad + \frac{11}{163840} Q_{6}(s)
+ \frac{177}{1638400} Q_4(s)
+ \frac{149}{983040} Q_2(s)
+ \frac{3}{16384} \\
&= s^2 \left(s - \frac{\tau^{-1}}{2} \right)^2
\left(s + \frac{\tau^{-1}}{2} \right)^2
\left(s - \frac{\tau}{2} \right)^2
\left(s + \frac{\tau}{2} \right)^2 \left(s^2 - \frac14\right)^2
\left( \frac{6}{5} - s^2 \right)
\end{aligned}
\end{equation}
to Lemma \ref{lem:LP_bound}, we obtain 
$\frac{3}{16384} |X'|^2 - F_{2I}(1)|X'|\ge 0$ (note that the coefficients of $Q_{16}$ and $Q_{14}$ in the Gegenbauer expansion of $F_{2I}$ are negative).
Hence, it holds that
\[ |X'|\ge  \frac{16384}{3} F_{2I}(1) = 60.\]
\end{proof}

\begin{remark}
By finding a test function of degree $17$, which is different from our $F_{2I}(s)$, Andreev \cite[Theorem 1]{Andreev} proved that every spherical $11$-design $X\subset \mathbb{S}^3$ satisfies $|X|\ge120$ (see also \cite[Theorem 6]{BoyvalenkovDanev01}).
\end{remark}

From the above proofs, we are led to the following definition.

\begin{definition}
\begin{itemize}
\item[(i)] An antipodal spherical $\{10,4,2\}$-design $X$ of $\mathbb{S}^3$ is said to be \emph{minimal} when $|X|=24$.
\item[(ii)] An antipodal spherical $\{14,10,6,4,2\}$-design $X$ of $\mathbb{S}^3$ is said to be \emph{minimal} when $|X|=48$.
\item[(iii)] An antipodal spherical $\{10,8,6,4,2\}$-design $X$ of $\mathbb{S}^3$ is said to be \emph{minimal} when $|X|=120$.
\end{itemize}
\end{definition}

Using the expressions \eqref{eq:F_{2T}}, \eqref{eq:F_{2O}} and \eqref{eq:F_{2I}} of our test functions, one can control the set $A(X)$ of inner products for the above minimal antipodal spherical $T$-designs $X$. 

\begin{corollary}\label{cor:angles}
\begin{itemize}
\item[(i)] An antipodal spherical $\{10,4,2\}$-design $X$ of $\mathbb{S}^3$ is minimal if and only if
\[A(X) \subset \left\{-1,\pm \frac12,0\right\}.\]
\item[(ii)] An antipodal spherical $\{14,10,6,4,2\}$-design $X$ of $\mathbb{S}^3$ is minimal if and only if 
\[A(X)\subset \left\{-1,\pm \frac{1}{\sqrt{2}},\pm \frac12,0\right\}.\]
\item[(iii)] An antipodal spherical $\{10,8,6,4,2\}$-design $X$ of $\mathbb{S}^3$ is minimal if and only if
\[A(X)\subset \left\{-1,\pm \frac{\tau^{-1}}{2},\pm \frac{\tau}{2}, \pm \frac12,0\right\}.\]
\end{itemize}
\end{corollary}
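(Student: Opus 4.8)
The plan is to reduce everything to a half set and run the equality analysis of the linear programming bound just proved. Fix $G\in\{2T,2O,2I\}$, let $X$ be an antipodal design of the relevant type with half set $X'$, and recall from the proof of Theorem \ref{thm:LP_bound} that, with the test function $F=F_G$, Lemma \ref{lem:LP_bound} reads $f_0|X'|^2-F(1)|X'|\ge \sum_{s\in A(X')}F(s)A_s(X')\ge 0$, the second inequality because $F_G\ge 0$ on $[-1,1]$ and $A(X')\subset(-1,1)$ (a half set contains no antipodal pair). Two bookkeeping facts will be used throughout: the factored forms of $F_{2T},F_{2O},F_{2I}$ show that the zeros of $F_G$ in $(-1,1)$ are exactly the claimed angles other than $-1$ (the remaining factor $(s^2-7/8)^2+3/4$, $(s^2-7/8)^2+1/4$, or $6/5-s^2$ is strictly positive on $[-1,1]$); and $A(X)=A(X')\cup(-A(X'))\cup\{-1\}$, so that for any negation-stable $S$ with $\pm1\notin S$ one has $A(X)\subset\{-1\}\cup S$ if and only if $A(X')\subset S$.

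The forward implication (minimal $\Rightarrow$ prescribed angles) is uniform in the three cases. If $X$ is minimal then $|X'|$ meets the lower bound $F(1)/f_0$ computed in Theorem \ref{thm:LP_bound}, so $f_0|X'|^2-F(1)|X'|=0$ and the whole chain collapses to equalities; in particular $\sum_{s\in A(X')}F(s)A_s(X')=0$. Since each summand is nonnegative and $A_s(X')>0$ for $s\in A(X')$, this forces $F_G(s)=0$ on $A(X')$, i.e. $A(X')$ lies in the zero set of $F_G$ in $(-1,1)$; by the first bookkeeping fact this is the asserted containment for $A(X)$.

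For the converse in cases (i) and (ii) the same test function does double duty. Apart from the constant $f_0$, the nonzero Gegenbauer coefficients of $F_{2T}$ (resp. $F_{2O}$) occur only in even degrees $\ell\in T$, for which $\sum_{x,y\in X'}Q_\ell(\langle x,y\rangle)=0$ holds since $\ell\in T(X')$; hence the Delsarte relation degenerates to the exact equation $f_0|X'|^2-F(1)|X'|=\sum_{s\in A(X')}F(s)A_s(X')$. If $A(X)$ is contained in the prescribed set, then $A(X')$ lies in the zero set of $F_G$, the right-hand side vanishes, and the equation yields $|X'|=F(1)/f_0$, i.e. minimality. (Equivalently, because all Gegenbauer coefficients of $F_{2T},F_{2O}$ are nonnegative, these polynomials simultaneously serve as linear-programming upper-bound certificates.)

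The genuine obstacle is the converse in case (iii): $F_{2I}$ is forced to carry negative coefficients in degrees $14$ and $16$ (degrees outside $T$, needed because the bound $120$ beats the Fisher bound $112$), so the relation keeps the nonnegative slack $-f_{14}\sum_{x,y}Q_{14}-f_{16}\sum_{x,y}Q_{16}$ and cannot be reversed. I would instead pin the distance distribution down directly. Antipodality gives $A_s(X)=A_{-s}(X)$ and $A_{1}(X)=A_{-1}(X)=|X|$, so the unknowns are $A_0,A_{\tau^{-1}/2},A_{1/2},A_{\tau/2}$; the design conditions $\sum_{s}A_s(X)Q_\ell(s)=0$ for the even degrees $\ell=2,4,6,8,10$ (the odd ones holding automatically) form a linear system whose solution is determined up to the overall scale $|X|$, and feeding this into the normalization $\sum_s A_s(X)=|X|^2$ then forces $|X|=120$. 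The point to check is that the $5\times4$ matrix $\big(Q_\ell(\alpha)\big)$ has full column rank, so the distance distribution is rigid; existence of $2I$ guarantees consistency and that the forced value is $120$. Alternatively one may invoke the known optimality/uniqueness of the $600$-cell (\cite{BoyvalenkovDanev01}). This upper bound is where the real work lies, precisely because the lower-bound test function $F_{2I}$ cannot supply it.
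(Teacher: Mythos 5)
Your argument is correct, and on the parts where it overlaps the paper it is essentially the paper's own proof: the paper deduces the corollary from the equality analysis of \eqref{eq:F_{2T}_ineq}, and your explicit observation that for $F_{2T}$ and $F_{2O}$ the Delsarte inequality is in fact an \emph{identity} (every nonconstant Gegenbauer coefficient sits in a degree belonging to $T(X')$, the half set inheriting the even part of $T(X)$) is precisely the implicit content behind the paper's one-line ``if and only if''. The genuine divergence is the converse of (iii). The paper disposes of it with ``the other two cases are obtained in much the same way'', but you are right that this cannot be taken literally: writing $\Sigma_\ell=\sum_{x,y\in X'}Q_\ell(\langle x,y\rangle)\ge0$ and using $T(X')\supset\{2,4,6,8,10\}$ together with the evenness of $F_{2I}$, the identity reads
\[
\frac{3}{16384}\,|X'|\,\bigl(|X'|-60\bigr)\;=\;\sum_{s\in A(X')}F_{2I}(s)A_s(X')\;+\;|f_{14}|\,\Sigma_{14}\;+\;|f_{16}|\,\Sigma_{16},
\]
where $f_{14},f_{16}<0$ are the coefficients in \eqref{eq:F_{2I}}; vanishing of $F_{2I}$ on $A(X')$ therefore only reproduces the lower bound $|X'|\ge 60$ and can never certify $|X'|\le 60$. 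Your replacement closes this correctly, and the rank claim you flagged does hold: using $Q_\ell(\cos\theta)=(\ell+1)\sin((\ell+1)\theta)/\sin\theta$ for $d=4$, the $4\times4$ minor of $\bigl(Q_\ell(\alpha)\bigr)$ with $\ell\in\{2,4,6,8\}$ and $\alpha\in\{\tau/2,\,1/2,\,\tau^{-1}/2,\,0\}$ has determinant $945\sqrt{5}\neq0$, so the system $\sum_{s}A_s(X)Q_\ell(s)=0$ for $\ell=2,4,6,8,10$, with $A_{\pm1}(X)=|X|$ and $A_{-s}(X)=A_s(X)$, has the unique solution $A_{\pm\tau/2}=A_{\pm\tau^{-1}/2}=12|X|$, $A_{\pm1/2}=20|X|$, $A_0=30|X|$, and the normalization $\sum_sA_s(X)=|X|^2$ then forces $|X|=120$.

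Comparing the two routes: the paper's equality-case trick is shorter wherever the test function's Gegenbauer support lies in $T\cup\{0\}$ (cases (i) and (ii), where as you note $F_{2T},F_{2O}$ even double as Delsarte code-bound certificates because all their coefficients are nonnegative), but it is structurally unable to deliver the upper bound in (iii) --- your Fisher-bound remark $120>112=2\binom{8}{5}$ explains why negative coefficients outside $T$ are unavoidable there. Your distance-distribution rigidity costs one explicit rank computation but works uniformly in all three cases, yields the full distance distribution rather than just the cardinality, and stays squarely within the paper's toolkit: it is the same Lemma \ref{lem:gegenbauer} computation the paper itself performs for the numbers $|X_s|$ in the proof of Theorem \ref{thm:uniqueness}(ii). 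Your fallback via \cite{BoyvalenkovDanev01} is also legitimate, since antipodality plus $\{10,8,6,4,2\}\subset T(X)$ makes $X$ a spherical $11$-design, but the self-contained linear-system argument is the better fit here.
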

\begin{proof}
For a half set $X'$ of an antipodal spherical $\{10,4,2\}$-design $X$ of $\mathbb{S}^3$, it follows from \eqref{eq:F_{2T}_ineq} that the equality $|X'|=12$ holds if and only if $F_{2T}(s)=0$ for all $s\in A(X')$.
Thus, the result follows from the expression in \eqref{eq:F_{2T}}.
The other two cases are obtained in much the same way as above and are therefore omitted.
\end{proof}

\subsection{Proof of Theorem \ref{thm:uniqueness}}

In this subsection, we show the uniqueness of $X=2O$ as a minimal antipodal spherical $\{14,10,6,4,2\}$-design, whose cardinality is $48$. 

\begin{proof}[Proof of Theorem \ref{thm:uniqueness} (ii)] 
Let 
\[ A:= \left\{-1,\pm \frac{1}{\sqrt{2}},\pm \frac12,0\right\}.\]
From Corollary \ref{cor:angles} (ii), the set of the inner products satisfies $A(X)\subset A$ (namely, $X$ is an $A$-code).
Fix an arbitrary point $\boldsymbol{x}_0 \in X$, and we may suppose $\boldsymbol{x}_0=(1,0,0,0)$. 
For $s\in \R$, let 
\[ X_s: =\{ \boldsymbol{x} \in X \mid \langle \boldsymbol{x}, \boldsymbol{x}_0 \rangle =s \}.  \]
Note that $X_{-s}=-X_s$, since $X$ is antipodal.
Each point $\boldsymbol{x}$ of $X_{s}$ has the form $\boldsymbol{x}=(s,x_2,x_3,x_4)$. For such $\boldsymbol{x}$ with $s\neq \pm 1$, we write $\widetilde{\boldsymbol{x}}=\sqrt{1/(1-s^2)} (x_2,x_3,x_4)$ which is on the unit sphere $\mathbb{S}^2$.
For $s \in A(X)\setminus\{-1\}$, the set 
 \[\widetilde{X}_{s}:=\{\widetilde{\boldsymbol{x}} \in \mathbb{S}^2 \mid \boldsymbol{x} \in X_{s}\} \] 
is called the derived code with respect to $\boldsymbol{x}_0$. 
By \cite[Theorem 8.2]{DelsarteGoethalsSeidel77}, each derived code
$\widetilde{X}_s$ is a spherical $3$-design on $\mathbb{S}^2$, as $X$ is a spherical $7$-design and an $A$-code.

Notice that $X$ is distance invariant from 
Theorem 7.4 in \cite{DelsarteGoethalsSeidel77}, namely $ |X_s |$ does not depend on the choice of the initial $\boldsymbol{x}_0 \in X$.
Thus, we have $A_s(X)=|X_s|\times |X|$, where $A_s(X)$ is defined in \eqref{eq:d_a}.
From Lemma \ref{lem:gegenbauer}, one obtains the following system of equations in $|X_s|$: 
\[\sum_{s \in A\cup\{1\} }|X_s| Q_\ell^{(4)}(s) = 0\]
for each $\ell \in \{2, 4, 6\}$ (see \eqref{eq:Q_l^{d}} for the polynomials $Q_\ell^{(4)}(s)$). Together with the conditions $|X_{\pm 1} |=1$ and $|X_s| = |X_{-s}|$, this system of equations is solved as 
\[ |X_0| = 18, \qquad |X_{\pm 1/\sqrt{2}}| = 6, \qquad |X_{\pm 1/2}| = 8.\]
Since a spherical 3-design on $\mathbb{S}^2$ with $6$ points (namely, a tight 3-design) is unique and is a cross polytope (the regular octahedron) up to orthogonal transformations (cf.~\cite{BB09,BZ20}), we see that $\widetilde{X}_{\pm 1/\sqrt{2}}$ is an orthogonal transformation of $\{(\pm 1,0,0),(0,\pm 1,0),(0,0,\pm 1)\}$.
By taking a suitable orthogonal transformation, we may express
\begin{align*}
    X_1&=\{(1,0,0,0)\}, \qquad 
    X_{-1}=\{(-1,0,0,0)\}, \\ 
    X_{1/\sqrt{2}}&= \left\{\left(\frac{1}{\sqrt{2}},\pm \frac{1}{\sqrt{2}},0 ,0\right),
    \left(\frac{1}{\sqrt{2}},0,\pm \frac{1}{\sqrt{2}},0 \right),
    \left(\frac{1}{\sqrt{2}},0,0,\pm \frac{1}{\sqrt{2}}\right)
    \right\}, \\
     X_{-1/\sqrt{2}}&= \left\{\left(-\frac{1}{\sqrt{2}},\pm \frac{1}{\sqrt{2}},0 ,0\right),
    \left(-\frac{1}{\sqrt{2}},0,\pm \frac{1}{\sqrt{2}},0 \right),
    \left(-\frac{1}{\sqrt{2}},0,0,\pm \frac{1}{\sqrt{2}}\right)
    \right\}. 
\end{align*}

We now compute the coordinates of $X_{\pm1/2}$.
From the comment below Definition 8.1 in \cite{DelsarteGoethalsSeidel77}, one has
\[ A(\widetilde{X}_{1/2}) \subset \left\{\frac{4}{3}s -\frac{1}{3} \ \middle| \ s \in A \right \}\cap [-1,1)=\left\{\pm \frac{1}{3}, \frac{2\sqrt{2}-1}{3},-1 \right\}.\]
 Since $\widetilde{X}_{1/2}$ is a spherical $3$-design, it follows from Lemma \ref{lem:gegenbauer} that 
 \begin{equation} \label{eq:0}
 \sum_{\widetilde{\boldsymbol{x}}, \widetilde{\boldsymbol{y}} \in \widetilde{X}_{1/2}} \langle \widetilde{\boldsymbol{x}}, \widetilde{\boldsymbol{y}} \rangle =0.     
 \end{equation}
If \( (2\sqrt{2}-1)/3 \) appears as an inner product in \eqref{eq:0}, then the left-hand side of \eqref{eq:0} is never zero.
This implies that $A(\widetilde{X}_{1/2}) \subset \left\{\pm \frac{1}{3},-1 \right\}$. 
Thus by \cite[Theorem 7.4]{DelsarteGoethalsSeidel77}, the set $Y=\widetilde{X}_{1/2}$ is distance invariant, namely, $|\{\boldsymbol{y}\in Y\mid \langle \boldsymbol{y},\boldsymbol{y}_0\rangle=s\}|$ does not depend on the choice of $\boldsymbol{y}_0\in Y$.
Fix $\boldsymbol{y}_0\in Y$ and let $Y_s=\{\boldsymbol{y}\in Y\mid \langle \boldsymbol{y},\boldsymbol{y}_0\rangle=s\}$.
Note that $|Y_1|=1$.
From Lemma \ref{lem:gegenbauer}, one has
\[\sum_{s\in \{\pm 1,\pm \frac13 \} }|Y_s| Q_\ell^{(3)}(s) = 0, \quad \ell=1,2,3.\]
This system of equations has the following solutions:
\[ |Y_{\pm 1/3}|=3, \qquad |Y_{-1}|=1.\]
Therefore, $Y$ is the cube in $\mathbb{S}^2$ (cf.~\cite{BB09,BZ20}). 
From \cite[Lemma 1]{BoyvalenkovDanev01}, for $\boldsymbol{y} \in Y$ and $\widetilde{\boldsymbol{x}} \in \widetilde{X}_{1/\sqrt{2}}$, we see that
\[ \langle \boldsymbol{y}, \widetilde{\boldsymbol{x}} \rangle \in 
\left\{\sqrt{\frac{8}{3}} s -\frac{1}{\sqrt{3}}\ \middle| \ s \in A\right\} \cap[-1,1)=\left\{\pm \frac{1}{\sqrt{3}}, \sqrt{\frac{2}{3}}-{\frac{1}{\sqrt{3}}}\right\}.\]
Using \cite[Corollary 2]{BoyvalenkovDanev01}, we compute the distance distribution of $Y$ with respect to $\widetilde{\boldsymbol{x}} \in \widetilde{X}_{1/\sqrt{2}}$: 
\[ \left|Y_{1/\sqrt{3}}^{\widetilde{\boldsymbol{x}}}\right|= 4, \qquad \left|Y_{-1/\sqrt{3}}^{\widetilde{\boldsymbol{x}}}\right|=4, \qquad \left|Y_{\sqrt{2/3}-1/\sqrt{3}}^{\widetilde{\boldsymbol{x}}}\right|=0,  \]
where $Y_s^{\widetilde{\boldsymbol{x}}}=\{\boldsymbol{\boldsymbol{y}} \in Y \mid \langle \boldsymbol{y}, \widetilde{\boldsymbol{x}} \rangle = s \}$. This implies that for each $\boldsymbol{x} \in X_{1/\sqrt{2}}$, we have that
\[ |\{\boldsymbol{y} \in X_{1/2} \mid \langle \boldsymbol{y} ,\boldsymbol{x} \rangle = 1/\sqrt{2}\}|=4, 
\qquad 
|\{ \boldsymbol{y} \in X_{1/2} \mid \langle \boldsymbol{y}, \boldsymbol{x}\rangle = 0\}|=4. \]
Using the above expression of $X_{1/\sqrt{2}}$, we can uniquely determine 
\[ X_{1/2}=\left\{\left(\frac{1}{2}, \pm \frac{1}{2}, \pm \frac{1}{2}, \pm \frac{1}{2}\right ) \right\}, \]
where we take all choices of signs. 
$X_{-1/2}$ is obtained by $-X_{1/2}$.

Finally we determine $\boldsymbol{x}=(0,x_2,x_3,x_4)\in X_0$. 
The inner products between elements in $X_0$ and in $X_{1/\sqrt{2}}$ belong to $A$, so $\pm \frac{x_j}{\sqrt{2}} \in A$ for 
$j=2,3,4$. 
This together with $x_2^2+x_3^2+x_4^2=1$ shows $x_j\in\{\pm1,\pm1/\sqrt{2}\}$.
Therefore, $X_0$ should be a subset of 
\[  \left\{(0,\pm 1, 0,0), (0,0,\pm 1, 0),(0,0,0,\pm 1),\left(0,\pm \frac{1}{\sqrt{2}},\pm \frac{1}{\sqrt{2}},0  \right),\left(0,\pm \frac{1}{\sqrt{2}},0,\pm \frac{1}{\sqrt{2}}  \right),\left(0,0,\pm \frac{1}{\sqrt{2}},\pm \frac{1}{\sqrt{2}}  \right) \right\}. \]
This set is size 18, and hence, coincides with $X_0$.

From the coordinates of $X_{1/\sqrt{2}}$, we can uniquely determine the coordinates of $X_s$ for each $s \in A(X)$. This implies the uniqueness of $2O$ as desired. 
\end{proof}

\section{Application to modular forms}\label{sec:order}

\subsection{Orders and modular forms}
Let $F$ be a totally real number field and $R$ its ring of integers.
The $F$-vector space 
\[D(F):=\{x_1+x_2i+x_3j+x_4k\mid x_1,\ldots,x_4\in F\}\]
forms an $F$-algebra, called the \emph{quaternion algebra over $F$}.
A subring $O\subset D(F)$ is an \emph{$R$-order} if there exists a basis $\{b_1,b_2,b_3,b_4\}$ of $D(F)$ such that $O=Rb_1\oplus R b_2\oplus Rb_3\oplus R b_4$. 
Note that the set $N(O)$ of all norm values of $O$ is a subset of $R$ (cf.~\cite[Chapter 10]{Voight2021}).
An $R$-order of $D$ is said to be \emph{maximal} if it is not properly contained in another $R$-order of $D$.

Recall our orders from the introduction.
Let $F_{2T}=\Q,\ F_{2O}=\Q\big(\sqrt{2}\big)$ and $F_{2I}=\Q\big(\sqrt{5}\big)$.
From \eqref{eq:def_exceptional_groups}, we see that each $G\in\{2T,2O,2I\}$ is a subset of $F_G^4$.
Let $R_G$ be the ring of integers of $F_G$, i.e., $R_{2T}=\Z, R_{2O}=\Z[\sqrt{2}], R_{2I}=\Z[\tau]$ with $\tau=\frac{1+\sqrt{5}}{2}$, and $\mathcal{O}_G=\langle G\rangle_{R_G}$ the $R_G$-subalgebra of the quaternion algebra over $F_G$ generated by $G$.
Then, $\mathcal{O}_G$ is a maximal $R_G$-order (cf.~\cite[Chapter 11]{Voight2021}).
Its basis is given in Table \ref{table:orders}.
For $G\in \{2T,2O,2I\}$, we define the map $\iota_G:R_G\rightarrow\Z$ by
\begin{equation}\label{eq:def_iota}
\begin{aligned}
&\iota_{2T}(a)=a, \quad (a\in \Z=R_{2T}),\\
&\iota_{2O}(a+b\sqrt{2})=a, \quad (a+b\sqrt{2}\in \Z+\Z\sqrt{2}=R_{2O}),\\
&\iota_{2I}(a+b\tau)=a, \quad (a+b\tau\in \Z+\Z\tau=R_{2I}).
\end{aligned}
\end{equation}

For $G\in \{2T,2O,2I\}$, let
\[\mathcal{Q}_G(x):=\iota_G(N(x)).\]
For $m\in \N$, we study the following finite subset of $\R^4$:
\[ \mathcal{O}_{G,m}=\{ x\in \mathcal{O}_G\mid \mathcal{Q}_G(x)=m\}.\]
We give explicit formulas for $|\mathcal{O}_{G,m}|$.
For this, let $\sigma_k(m)=\sum_{d\mid m}d^k$ be the divisor function.
We set $\sigma_k(m/2)=0$ if $m$ is odd.

\begin{proposition}\label{prop:counting}
\begin{itemize}
\item[(i)] For $m\in \N$, we have $|\mathcal{O}_{2T,m}| = 24\left(\sigma_1(m)-2\sigma_1\left(\frac{m}{2}\right)\right)>0$. 
Moreover, $\mathcal{O}_{2T,1}=2T$. 
\item[(ii)] For $m\in \N$, we have $|\mathcal{O}_{2O,m}| = 240\left(5\sigma_3(m)-4\sigma_3\left(\frac{m}{2}\right)\right)>0$. 
Moreover, $\mathcal{O}_{2O,1}=2O$. 
\item[(iii)] For $m\in \N$, we have $|\mathcal{O}_{2I,m}| =  240\sigma_3(m)>0$. 
Moreover, $\mathcal{O}_{2I,1}=2I\cup \tau\, 2I$. 
\end{itemize}
\end{proposition}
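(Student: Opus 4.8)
The plan is to identify $|\mathcal{O}_{G,m}|$ with the $m$-th Fourier coefficient of the pure theta series $\theta_{G,1}(z)=\sum_{x\in\mathcal{O}_G}q^{\mathcal{Q}_G(x)}=\sum_{m\ge0}|\mathcal{O}_{G,m}|q^{m}$, i.e.\ the specialization of \eqref{eq:theta} to the constant polynomial $P=1$, and then to compute this modular form explicitly. First I would record that $\mathcal{Q}_G=\iota_G\circ N$ is a positive definite integral quadratic form on the $\Z$-lattice $\mathcal{O}_G$, which has $\Z$-rank $4$ when $G=2T$ and $\Z$-rank $8$ when $G\in\{2O,2I\}$ since $[F_G:\Q]=2$. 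Integrality follows by writing down the Gram matrix from the basis in Table \ref{table:orders}. Positive definiteness is the one genuinely structural point: because $G\subset\mathbb{H}_1$, the quaternion algebra over $F_G$ is totally definite, so each norm $N(x)$ is a \emph{totally positive} element of $R_G$; writing $N(x)=u+v\sqrt{2}$ for $G=2O$ (resp.\ $N(x)=u+v\tau$ for $G=2I$) and adding $N(x)$ to its Galois conjugate $N(x)'$ gives $2u=N(x)+N(x)'>0$ for $x\neq 0$, so $\mathcal{Q}_G(x)=\iota_G(N(x))=u>0$.

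The core of the proof is then formal. By the theorem of Hecke and Schoeneberg cited after \eqref{eq:theta}, $\theta_{G,1}$ is a modular form of weight $2$ on $\Gamma_0(2)$ for $G=2T$, of weight $4$ on $\Gamma_0(2)$ for $G=2O$, and of weight $4$ on $\mathrm{SL}_2(\Z)$ for $G=2I$. I would then invoke the dimension formulas $\dim_\C M_2(\Gamma_0(2))=1$, $\dim_\C M_4(\Gamma_0(2))=2$ and $\dim_\C M_4(\mathrm{SL}_2(\Z))=1$, together with the vanishing $S_2(\Gamma_0(2))=S_4(\Gamma_0(2))=S_4(\mathrm{SL}_2(\Z))=0$ of the corresponding spaces of cusp forms. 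Hence each ambient space is spanned by Eisenstein series, and I may use the basis $2E_2(2z)-E_2(z)$ in weight $2$ on $\Gamma_0(2)$, the pair $E_4(z),E_4(2z)$ in weight $4$ on $\Gamma_0(2)$, and $E_4(z)$ in weight $4$ on $\mathrm{SL}_2(\Z)$, where $E_2(z)=1-24\sum_{n\ge1}\sigma_1(n)q^n$ and $E_4(z)=1+240\sum_{n\ge1}\sigma_3(n)q^n$.

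It remains to fix the Eisenstein coefficients, which I can do from just the $q^0$ and $q^1$ terms. The constant term of $\theta_{G,1}$ is $1$ (the contribution of $x=0$). In the two one-dimensional cases this alone forces $\theta_{2T}=2E_2(2z)-E_2(z)$ and $\theta_{2I}=E_4(z)$, and comparing Fourier coefficients gives formulas (i) and (iii). For $G=2O$ I would add the single extra datum $|\mathcal{O}_{2O,1}|$, which determines the two-dimensional combination uniquely; expanding the resulting form yields (ii). All three values $|\mathcal{O}_{G,1}|$ come from the same total-positivity bound as above: $\mathcal{Q}_G(x)=1$ means $\iota_G(N(x))=1$ with $N(x)$ totally positive, and the two conjugate inequalities pin $N(x)$ down to finitely many units. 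For $2T$ and $2O$ they force $N(x)=1$, so $\mathcal{O}_{G,1}$ is precisely the unit group $G$; for $2I$ the inequalities $1+b\tau>0$ and $1+b\tau'>0$ (with $\tau'$ the Galois conjugate of $\tau$) leave exactly $b\in\{0,1\}$, i.e.\ $N(x)\in\{1,\tau^2\}$, so $\mathcal{O}_{2I,1}=2I\sqcup\tau\,2I$ has $240$ elements. Finally, the positivity $|\mathcal{O}_{G,m}|>0$ follows from the closed formulas: for $2I$ it is immediate, for $2T$ the bracket $\sigma_1(m)-2\sigma_1(m/2)$ equals $\sigma_1(\kappa)>0$ on writing $m=2^a\kappa$ with $\kappa$ odd, and the $2O$ bracket reduces in the same way to a strictly positive multiple of $\sigma_3(\kappa)$.

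The step I expect to be the main obstacle is not the modular-forms bookkeeping, which is automatic once the weight and level are in hand, but the two explicit inputs feeding it: verifying from the Gram matrix that $\mathcal{Q}_{2T}$ and $\mathcal{Q}_{2O}$ have level exactly $2$ and $\mathcal{Q}_{2I}$ level $1$, both with trivial character, so that the named ambient spaces are the correct ones; and the rigorous determination of $\mathcal{O}_{2I,1}$, where one must know that the unit group of $\mathcal{O}_{2I}$ is exactly $2I$ and that every $x\in\mathcal{O}_{2I}$ with $N(x)=\tau^2$ indeed lies in the coset $\tau\,2I$. Once the level and character are settled, everything else reduces to the elementary total-positivity estimates above.
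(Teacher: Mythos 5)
Your modular-forms pipeline is the same as the paper's: both establish that $\mathcal{Q}_G=\iota_G\circ N$ is a positive definite integral quadratic form, invoke Hecke--Schoeneberg to place $\theta_{G,1}$ in the space of modular forms of weight $2$ for $\Gamma_0(2)$, weight $4$ for $\Gamma_0(2)$, and weight $4$ for ${\rm SL}_2(\Z)$ respectively, and pin down the Eisenstein combination from the constant term, plus the one extra datum $|\mathcal{O}_{2O,1}|=48$ in the two-dimensional case (the paper cites the Sturm bound for exactly this purpose). The genuine divergence is in how $\mathcal{O}_{G,1}$ and positive definiteness are obtained: the paper writes each $\mathcal{Q}_G$ out in integral coordinates and solves $\mathcal{Q}_G(x)=1$ by a Mathematica search, whereas you argue structurally via total positivity of reduced norms, restricting $N(x)$ to finitely many totally positive values with $\iota_G(N(x))=1$ and identifying the solution set with the norm-one unit group $\mathcal{O}_G^1$ (plus, for $2I$, the coset $\tau\,2I$, which closes at once: $\tau^{-1}=\tau-1\in R_{2I}$ is central, so $N(x)=\tau^2$ implies $\tau^{-1}x\in\mathcal{O}_{2I}^1$). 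This is machine-free and more illuminating, and you also actually prove the positivity $|\mathcal{O}_{G,m}|>0$, which the paper's proof leaves implicit.

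Three points need repair or completion. First, your trace identity fails for $2I$: with $N(x)=u+v\tau$ one has $N(x)+N(x)'=2u+v$, not $2u$, since ${\rm Tr}(\tau)=1$. The conclusion $\mathcal{Q}_{2I}(x)=u>0$ for $x\neq0$ survives, but you must use the two embeddings separately: if $u\le0$, then $u+v\tau>0$ forces $v>0$, and then $u+v\tau'>0$ with $\tau'=(1-\sqrt{5})/2<0$ gives $u>-v\tau'>0$, a contradiction. Second, the assertions $\mathcal{O}_{2T}^1=2T$, $\mathcal{O}_{2O}^1=2O$, $\mathcal{O}_{2I}^1=2I$ are precisely the content the paper's computer search supplies, and as written they are the gap in your argument. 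They can be closed without a machine: $\mathcal{O}_G^1$ is finite (under the real embeddings $\mathcal{O}_G$ is discrete and norm-one elements land in the compact set $\mathbb{H}_1\times\mathbb{H}_1$, resp.\ $\mathbb{H}_1$ for $2T$), it contains $G$, and by the classification of finite subgroups of $\mathbb{H}_1$ recalled in Section 2 no finite subgroup properly contains $2O$ or $2I$ (note $48\nmid120$, and $S_4$, $A_5$ embed in no cyclic or dihedral group); for the Hurwitz order this maximality argument is unavailable since $2T\subsetneq 2O$, but the direct check of $x_1^2+x_2^2+x_3^2+x_4^2=1$ with all coordinates in $\Z$ or all in $\Z+\frac12$ yields exactly the $24$ elements of $2T$. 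Third, be aware that executing your $2O$ step faithfully does \emph{not} reproduce the closed formula in (ii) as printed: the unique weight-$4$ combination with constant term $1$ and $q$-coefficient $48$ is $\theta_{2O,1}(z)=\frac{1}{5}E_4(z)+\frac{4}{5}E_4(2z)$, giving $|\mathcal{O}_{2O,m}|=48\left(\sigma_3(m)+4\sigma_3\left(\frac{m}{2}\right)\right)$, which matches the paper's displayed expansion $1+48q+624q^2+1344q^3+5232q^4+\cdots$ but contradicts both the stated formula $240\left(5\sigma_3(m)-4\sigma_3\left(\frac{m}{2}\right)\right)$ and the printed combination $5E_4(z)-4E_4(2z)$; at $m=1$ the printed formula gives $1200\neq48=|2O|$, contradicting the proposition's own claim $\mathcal{O}_{2O,1}=2O$, so your method would in fact uncover a typo in (ii). Finally, you rightly flag that the levels and characters of the $\mathcal{Q}_G$ require verification; the paper asserts these values without proof as well, so you are on equal footing there.
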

\begin{proof}
For each $G\in \{2T,2O,2I\}$, we first observe that $\mathcal{Q}_G(x)\in \Z$ for $x\in \mathcal{O}_G$ defines a positive definite integral quadratic form (for background on quadratic forms, see \cite[Chapter 6]{Ogg}).
This implies that the spherical theta function $\theta_{G,P}(z)=\sum_{x\in \mathcal{O}_G} P(x) q^{\mathcal{Q}_G(x)} \ (q=e^{2\pi i z})$ defined in \eqref{eq:theta}, is a modular form (cf.~\cite[\S4.9]{Miyake}, \cite[\S6]{Ogg}).
Then, using the theory of modular forms, we express the generating function $\theta_{G,1}(z)=\sum_{m\ge0}| \mathcal{O}_{G,m} |q^{m}$ in terms of the Eisenstein series $E_2(z)=1-24\sum_{m\ge1}\sigma_1(m)q^{m}$ and $E_4(z)=1+240\sum_{m\ge1}\sigma_3(m)q^{m}$.
For the theory of modular forms for the congruence subgroup, see \cite{DS}.
The quadratic form $\mathcal{Q}_G(x)$ is also used to give a description of $\mathcal{O}_{G,1}$ (we solve the Diophantine equation $\mathcal{Q}_G(x)=1$ with the aid of Mathematica).

For the Hurwitz order $\mathcal{O}_{2T}$, let $x=r_1+r_2 i+ r_3 j+ r_4 \omega\in \mathcal{O}_{2T}$ with $r_1,r_2,r_3,r_4\in \Z$.
Then one obtains
\[\mathcal{Q}_{2T}(x)= r_1^2+r_2^2+r_3^2+r_4^2-r_1r_4-r_2r_4-r_3r_4,\]
which defines a positive definite integral quadratic form of level 2 and discriminant 4.
Hence, the spherical theta function $\theta_{2T,P}(z)$ for $P\in {\rm Harm}_\ell(\R^4)$ is a modular form of weight $2+\ell$ for $\Gamma_0(2)$.
The space of modular forms of weight $2$ for $\Gamma_0(2)$ is 1-dimensional spanned by $2E_2(z)-E_2(2z)$.
Therefore, we have
\[\theta_{2T,1}(z)=2E_2(2z)-E_2(z) =1+24q+24q^2+96q^3+24q^4+\cdots. \]
By finding integral solutions to the equation $r_1^2+r_2^2+r_3^2+r_4^2-r_1r_4-r_2r_4-r_3r_4=1$, we obtain $\mathcal{O}_{2T,1}=2T$.
Below is an example of Mathematica code to compute the set $\mathcal{O}_{2T,1}$:

\verb|Solve[r1^2+r2^2+r3^2+r4^2-r1*r4-r2*r4-r3*r4==1, {r1,r2,r3,r4}, Integers]|

\

For the case $\mathcal{O}_{2O}$, using the integral basis of $\Z[\sqrt{2}]$, we write $x=(y_1+z_1\sqrt{2})+(y_2+z_2\sqrt{2})\alpha +(y_3+z_3\sqrt{2})\beta+(y_4+z_4\sqrt{2})\alpha \beta \in \mathcal{O}_{2O}$ with $y_j,z_j\in \Z$.
Then we have
\begin{align*}
 \mathcal{Q}_{2O}(x) &= y_{1}^2+y_{1} y_{4}+2 y_{1} z_{2}+2 y_{1}
   z_{3}+y_{2}^2+y_{2} y_{3}+2 y_{2} z_{1}+2 y_{2}
   z_{4}+y_{3}^2+2 y_{3} z_{1}\\
 &+2 y_{3} z_{4}+y_{4}^2+2
   y_{4} z_{2}+2 y_{4} z_{3}+2 z_{1}^2+2 z_{1} z_{4}+2
   z_{2}^2+2 z_{2} z_{3}+2 z_{3}^2+2 z_{4}^2,
\end{align*}
which is positive definite.
The level and the discriminant of the above quadratic form is 2 and 16, respectively.
Hence, $\theta_{2O,P}(z)$ for $P\in {\rm Harm}_\ell(\R^4)$ is a modular form of weight $4+\ell$ for $\Gamma_0(2)$.
Note that the space of modular forms of weight $4$ for $\Gamma_0(2)$ is 2-dimensional.
Due to the Strum bound \cite[Corollary 9.20]{Stein}, expressing $\theta_{2O,1}(z)$ in terms of the Eisenstein series requires the coefficient of $q$, namely, $|\mathcal{O}_{2O,1}|$.
Now, let the computer calculate integer solutions to $\mathcal{Q}_{2O}(x)=1$ to get an explicit description of $\mathcal{O}_{2O,1}$. 
The result is $\mathcal{O}_{2O,1}=2O$. 
Hence, $|\mathcal{O}_{2O,1}|=48$ and we get
\[ \theta_{2O,1}(z)=5 E_4(z)-4 E_4(2z) = 1+48q+624q^2+1344q^3+5232q^4+\cdots.\]

For the case $\mathcal{O}_{2I}$, we also write $x=(y_1+z_1\tau)+(y_2+z_2\tau)i +(y_3+z_3\tau)\zeta +(y_4+z_4\tau)i\zeta \in \mathcal{O}_{2I}$ with $y_j,z_j\in \Z$.
It holds that
\begin{align*} 
\mathcal{Q}_{2I}(x) &= y_{1}^2+y_{1} y_{4}+y_{1} z_{3}-y_{1}
   z_{4}+y_{2}^2-y_{2} y_{3}+y_{2} z_{3}+y_{2}
   z_{4}+y_{3}^2+y_{3} z_{1}\\
   &+y_{3} z_{2}+y_{4}^2-y_{4}
   z_{1}+y_{4} z_{2}+z_{1}^2+z_{1} z_{3}+z_{2}^2+z_{2}
   z_{4}+z_{3}^2+z_{4}^2.
   \end{align*}
This is a positive definite integral quadratic form of level 1 and discriminant 1.
Thus, $\theta_{2I,P}(z)$ for $P\in {\rm Harm}_\ell(\R^4)$ is a modular form of weight $4+\ell$ for ${\rm SL}_2(\Z)$.
Since the underlying vector space is 1-dimensional, we get
\[ \theta_{2I,1}(z)=E_4(z) = 1+240q+2160q^2+6720q^3+17520q^4+\cdots.\]
One can check that the set $\mathcal{O}_{2I,1}$ of integer solutions to the equation $\mathcal{Q}_{2I}(x)=1$ has 240 points. Moreover, we obtain $\mathcal{O}_{2I,1}=2I \cup \tau\, 2I$, meaning that the set $\mathcal{O}_{2I,1}$ consists of two copies of $2I$.
This completes the proof.
\end{proof}

\begin{remark}
As the formula $|\mathcal{O}_{2I,m}| = 240\sigma_3(m)$ suggests, the $\Z[\tau]$-order $\mathcal{O}_{2I}$ has a connection with the $E_8$ lattice (an even integral unimodular lattice in $\R^8$).
Let $\kappa_4 : D(\Q(\tau))\rightarrow \Q^8$ be the $\Q$-linear map defined by 
\[ \kappa_4\big( (a_1+b_1\tau)+(a_2+b_2\tau)i+(a_3+b_3\tau)j+(a_4+b_4\tau)k\big)=(a_1,b_1,a_2,b_2,a_3,b_3,a_4,b_4).\]
Then, the image $\kappa_4\big(\mathcal{O}_{2I}\big)$ is isometric to the $E_8$ lattice.
Furthermore, the map $\kappa_4$ gives a bijection from $\mathcal{O}_{2I,m}$ to the $2m$-shell of the $E_8$ lattice.
This is because for $x=(a_1+b_1\tau)+(a_2+b_2\tau)i+(a_3+b_3\tau)j+(a_4+b_4\tau)k\in\mathcal{O}_{2I}$, it holds that
\[\iota_{2I}(N(x))=a_1^2+b_1^2+\cdots+a_4^2+b_4^2=\langle \kappa_4(x),\kappa_4(x)\rangle,\] 
where we have used $\tau^2=\tau+1$.
For more details, see \cite[Chapter 8]{ConwaySloan}. 
\end{remark}

\subsection{Proof of Theorem \ref{thm:dim=0}}

We first prove \eqref{eq:G_acts_on_O_{G,m}}.
For this, recall that the multiplication by a quaternion yields an orthogonal transformation (Lemma \ref{lem:quaternion_orthogonal}).

\begin{proposition}\label{prop:O_strength}
Let $G\in\{2T,2O,2I\}$. 
For all $m\in \N$, we have $T(G)\subset T\big(\mathcal{O}_{G,m}\big)$.
In particular, the equality $T(G)=T\big(\mathcal{O}_{G,1}\big)$ holds.
\end{proposition}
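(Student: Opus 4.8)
The plan is to exploit the left action of $G$ on $\mathcal{O}_{G,m}$ together with Lemma \ref{lem:quaternion_orthogonal}, thereby reducing the sum of a harmonic polynomial over $\mathcal{O}_{G,m}$ to a collection of sums over $G$ itself, where the defining vanishing condition of $T(G)$ is available. The starting observation is that for $\varepsilon\in G$ (so that $N(\varepsilon)=1$) and $x\in\mathcal{O}_{G,m}$, one has $\varepsilon x\in\mathcal{O}_G$ and, by multiplicativity of the norm, $N(\varepsilon x)=N(\varepsilon)N(x)=N(x)$; hence $\mathcal{Q}_G(\varepsilon x)=\mathcal{Q}_G(x)=m$, so left multiplication by $\varepsilon$ maps $\mathcal{O}_{G,m}$ bijectively onto itself. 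Since every $x\in\mathcal{O}_{G,m}$ is a nonzero, hence invertible, quaternion, this action is free, and $\mathcal{O}_{G,m}$ decomposes into left $G$-orbits $Gx_0=\{\varepsilon x_0\mid\varepsilon\in G\}$, each of cardinality $|G|$.

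Fixing $\ell\in T(G)$ and $P\in{\rm Harm}_\ell(\R^4)$, I would then show that every orbit sum $\sum_{\varepsilon\in G}P(\varepsilon x_0)$ vanishes. Write $n_0:=N(x_0)=\langle x_0,x_0\rangle>0$ and let $g$ denote right multiplication by the unit quaternion $x_0/\sqrt{n_0}$, that is $g(y)=y\,x_0/\sqrt{n_0}$. By multiplicativity of the norm (the right-handed counterpart of Lemma \ref{lem:quaternion_orthogonal}), $g\in O(\R^4)$. Since $\varepsilon x_0=\sqrt{n_0}\,g(\varepsilon)$, homogeneity of degree $\ell$ gives $P(\varepsilon x_0)=n_0^{\ell/2}(P\circ g)(\varepsilon)$. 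Because $g$ is linear and orthogonal, $P\circ g$ is again harmonic of homogeneous degree $\ell$ (the Laplacian commutes with orthogonal substitutions), so $P\circ g\in{\rm Harm}_\ell(\R^4)$. Therefore $\sum_{\varepsilon\in G}P(\varepsilon x_0)=n_0^{\ell/2}\sum_{\varepsilon\in G}(P\circ g)(\varepsilon)=0$ by the definition \eqref{eq:T(X)} of $T(G)$. Summing over a set of orbit representatives yields $\sum_{x\in\mathcal{O}_{G,m}}P(x)=0$, i.e.\ $\ell\in T(\mathcal{O}_{G,m})$, which establishes $T(G)\subset T(\mathcal{O}_{G,m})$ for all $m\in\N$.

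For the ``in particular'' equality I would invoke Proposition \ref{prop:counting}. For $G=2T$ and $G=2O$ it gives $\mathcal{O}_{G,1}=G$, so $T(\mathcal{O}_{G,1})=T(G)$ holds trivially. For $G=2I$, Proposition \ref{prop:counting}(iii) gives the disjoint union $\mathcal{O}_{2I,1}=2I\sqcup\tau\,2I$, the two pieces lying on spheres of radii $1$ and $\tau$ and hence being disjoint. Since $\tau\,2I$ is the dilation of $2I$ by the positive real scalar $\tau$, homogeneity yields, for every $P\in{\rm Harm}_\ell(\R^4)$,
\[
\sum_{x\in\mathcal{O}_{2I,1}}P(x)=\sum_{x\in 2I}P(x)+\tau^\ell\sum_{x\in 2I}P(x)=(1+\tau^\ell)\sum_{x\in 2I}P(x).
\]
As $1+\tau^\ell>0$, the left-hand side vanishes precisely when $\sum_{x\in 2I}P(x)$ does, proving $T(\mathcal{O}_{2I,1})=T(2I)$ and completing the equality $T(G)=T(\mathcal{O}_{G,1})$ in all three cases.

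The only step requiring genuine care—and the conceptual crux—is the orbit reduction itself: although $\mathcal{O}_{G,m}$ need not lie on a single sphere and may mix several Euclidean norms, the free left $G$-action reorganizes it into orbits which, after stripping off the scalar $\sqrt{n_0}$ allowed by homogeneity, are exactly orthogonal images $g(G)$ of the group $G$. This is what transports the vanishing condition defining $T(G)$ to each orbit. The remaining ingredients (the invariance $N(\varepsilon x)=N(x)$, orthogonality of right multiplication, and preservation of harmonicity under $O(\R^4)$) are routine, so I anticipate no substantial obstacle once the orbit decomposition is set up correctly.
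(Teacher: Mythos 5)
Your proof is correct and follows essentially the same route as the paper's: decompose $\mathcal{O}_{G,m}$ into $G$-orbits, observe via the multiplicativity of the norm that each orbit is a scaled orthogonal image of $G$ (you use left orbits with right multiplication as the orthogonal map, while the paper uses right orbits $xG$ with left multiplication via Lemma~\ref{lem:quaternion_orthogonal}; these are mirror images of one another), and then handle $m=1$ through Proposition~\ref{prop:counting}, with the identity $\sum_{x\in\mathcal{O}_{2I,1}}P(x)=(1+\tau^{\ell})\sum_{x\in 2I}P(x)$ exactly as in the paper. The only cosmetic difference is that you verify both directions of the $2I$ equality directly from $1+\tau^{\ell}>0$, whereas the paper obtains one inclusion from the already-proved general containment.
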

\begin{proof}
Let $m\in \N$.
For $\varepsilon\in G$ and $x\in \mathcal{O}_{G,m}$, since $N(x\varepsilon)=N(x)$, it holds that $x\varepsilon \in \mathcal{O}_{G,m}$.
This shows that $G$ acts on the set $\mathcal{O}_{G,m}$. 
Hence, we have a $G$-orbit decomposition 
\begin{equation}\label{eq:orbit_decomp} 
\mathcal{O}_{G,m} = \bigsqcup_{x\in S_m} xG
\end{equation}
of $\mathcal{O}_{G,m}$ as a disjoint union of $\{xG \mid x\in S_m\}$ for some $S_m\subset \mathcal{O}_{G,m}$.
By Lemma \ref{lem:quaternion_orthogonal}, the $G$-orbit $xG$ is an orthogonal transformation of $\sqrt{N(x)} G$.
It is well known that, for $P\in {\rm Harm}_\ell (\R^4)$ and $g\in O(\R^4)$, we have $gP\in {\rm Harm}_\ell (\R^4)$, where $(gP)(x):=P(g(x))$.
Therefore, $T(G)= T(xG)$ holds for any $x\in S_m$.
Finally, the result follows from the fact that $T(X_1\sqcup X_2)$ contains $ T(X_1)\cap T(X_2)$.

Let us turn to the case $m=1$.
From Proposition \ref{prop:counting}, for $G\in \{2T,2O\}$, we have $G=\mathcal{O}_{G,1}$.
Thus, $T(G)=T\big(\mathcal{O}_{G,1}\big)$.
For the case $G=2I$, since $T(2I)\subset T\big(\mathcal{O}_{2I,1}\big)$, it suffices to show $T(2I)\supset T\big(\mathcal{O}_{2I,1}\big)$.
Suppose $\ell\in T\big(\mathcal{O}_{2I,1}\big)$.
By Proposition \ref{prop:counting} (iii), for all $P\in {\rm Harm}_\ell(\R^4)$ one has
\[ 0=\sum_{x\in \mathcal{O}_{2I,1}} P(x)=\sum_{x\in 2I} P(x) + \sum_{x\in 2I} P(\tau x)=(1+\tau^\ell ) \sum_{x\in 2I} P( x).\]
Thus, $\ell \in T(2I)$.
This completes the proof.
\end{proof}

We are now in a position to prove Theorem \ref{thm:dim=0}.

\begin{proof}[Proof of Theorem \ref{thm:dim=0}]
We first prove that $\dim_\C \Theta(G,\ell)=0 \Rightarrow \ell \in T(G)$.
Suppose $\ell \not\in T(G)$.
Then, by Lemma \ref{prop:O_strength}, $\ell \not\in T\big(\mathcal{O}_{G,1}\big)$.
This shows that there exists $P\in {\rm Harm}_\ell(\R^4)$ such that $\theta_{G,P}(z)\neq0$.
Hence, $\dim_\C\Theta(G,\ell)\ge1$.

Let us turn to the proof of $ \ell \in T(G)\Rightarrow \dim_\C \Theta(G,\ell)=0$.
For $\ell \in T(G)$, by Lemma \ref{prop:O_strength}, $\ell \in T\big(\mathcal{O}_{G,m}\big)$ holds for all $m\in\N$.
Hence, $\theta_{G,P}=0$ for all $P\in {\rm Harm}_\ell (\R^4)$.
Thus, $\dim_\C\Theta(G,\ell)=0$.
We have done.
\end{proof}

\begin{remark}\label{rem:HNT}
In our previous paper \cite[\S6]{HiraoNozakiTasaka}, as an application of the uniqueness, we gave a decomposition of the $m$-shell of the $D_4$ lattice in terms of a disjoint union of the orthogonal transformations (up to scaler) of the $D_4$ root system.
We have the same structure: For any $m>0$, there exists a finite subset $U_m$ of the orthogonal transformation group $O(\R^4)$ such that 
\[\left\{ \frac{x}{\sqrt{N(x)}} \ \middle| \  x\in \mathcal{O}_{G,m}\right\}=\bigsqcup_{g\in U_m} g (G).\]
This is because the orbit $xG$ in \eqref{eq:orbit_decomp} is an orthogonal transformation of $\sqrt{N(x)} G$.
\end{remark}

\subsection{More on the space $\Theta(G,\ell)$}

In this last subsection, we first give an upper bound of the dimension of the space $\Theta(G,\ell)$, by using the theory of invariant polynomials.
Then, numerical basis of the non-zero space $\Theta(G,\ell)$ are provided for some $\ell$.

\subsubsection{Invariant polynomials}
For a subgroup $G$ of $\mathbb{H}_1$, denote the $G$-invariant subspace of ${\rm Harm}_{\ell} (\R^4)$ by 
\[{\rm Harm}_\ell (\R^4)^G :=\{ P\in {\rm Harm}_\ell (\R^4) \mid \varepsilon P=P\ \mbox{for all}\ \varepsilon \in G\},\]
where we set $(\varepsilon P)(x):=P(x M_\varepsilon)$ for $x\in\R^4$ and $M_\varepsilon$ is the matrix defined in \eqref{eq:M_x}.
If we identify $\R^4$ with $\mathbb{H}$ as in Lemma \ref{lem:quaternion_orthogonal}, then $(\varepsilon P)(x)=P(\varepsilon x)$.

\begin{proposition}\label{prop:upper_bound}
For $G\in \{2T,2O,2I\}$ and $\ell\in \N$, we have 
\[ \dim_\C \Theta(G,\ell) \le \dim_\R {\rm Harm}_\ell (\R^4)^G.\]
\end{proposition}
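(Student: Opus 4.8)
The plan is to exploit the $G$-invariance of the assignment $P\mapsto \theta_{G,P}$ in order to cut the spanning set of $\Theta(G,\ell)$ down from all of ${\rm Harm}_\ell(\R^4)$ to its $G$-invariant subspace. The first and key step is to verify that $\theta_{G,\varepsilon P}=\theta_{G,P}$ for every $\varepsilon\in G$ and every $P\in {\rm Harm}_\ell(\R^4)$, where the action is the one used to define ${\rm Harm}_\ell(\R^4)^G$, namely $(\varepsilon P)(x)=P(\varepsilon x)$. This should follow from the change of variables $y=\varepsilon x$ in the defining sum \eqref{eq:theta}: since $\varepsilon$ is a unit of $\mathcal{O}_G$, left multiplication by $\varepsilon$ permutes $\mathcal{O}_G$, and because $N(\varepsilon)=1$ it preserves the exponent $\iota_G(N(x))$, so the sum is left unchanged.

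Next I would introduce the averaging (Reynolds) operator $\pi_G(P)=\frac{1}{|G|}\sum_{\varepsilon\in G}\varepsilon P$. Each $\varepsilon\in G$ acts through the orthogonal matrix $M_\varepsilon$ of Lemma \ref{lem:quaternion_orthogonal}, so $\varepsilon P$ is again harmonic of homogeneous degree $\ell$; hence the average $\pi_G(P)$ is harmonic and, by construction, $G$-invariant, that is $\pi_G(P)\in {\rm Harm}_\ell(\R^4)^G$. Combining this with the first step and the $\R$-linearity of $P\mapsto \theta_{G,P}$ gives $\theta_{G,P}=\theta_{G,\pi_G(P)}$, so that the value of $\theta_{G,P}$ depends only on the invariant part $\pi_G(P)$.

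Finally I would fix a \emph{real} basis $P_1,\dots,P_d$ of ${\rm Harm}_\ell(\R^4)^G$, where $d=\dim_\R {\rm Harm}_\ell(\R^4)^G$. For an arbitrary $P\in {\rm Harm}_\ell(\R^4)$, writing $\pi_G(P)=\sum_{j} a_j P_j$ with $a_j\in\R$ yields $\theta_{G,P}=\sum_j a_j\,\theta_{G,P_j}$, so every generator of $\Theta(G,\ell)$ lies in the $\C$-span of $\theta_{G,P_1},\dots,\theta_{G,P_d}$. Therefore $\Theta(G,\ell)=\langle \theta_{G,P_1},\dots,\theta_{G,P_d}\rangle_\C$ is spanned by $d$ elements over $\C$, and $\dim_\C \Theta(G,\ell)\le d$, which is the claimed bound.

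The only genuinely delicate point is the bookkeeping between real and complex dimensions: the inequality mixes $\dim_\C$ on the left with $\dim_\R$ on the right, and the argument works precisely because one does \emph{not} complexify the invariant space, but instead uses a real basis of ${\rm Harm}_\ell(\R^4)^G$ as a $\C$-spanning set for the theta images. Apart from this subtlety the proof is purely formal; it requires neither the theory of modular forms nor the explicit structure of the order $\mathcal{O}_G$.
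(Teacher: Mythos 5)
Your proposal is correct and is essentially the paper's proof in a different guise: the paper decomposes ${\rm Harm}_\ell(\R^4)$ as ${\rm Harm}_\ell(\R^4)^G\oplus {\rm Span}_\R\{P-\varepsilon P\}$ and shows $\theta_{G,P-\varepsilon P}=0$ via the same change of variables (left multiplication by $\varepsilon\in G$ permutes $\mathcal{O}_G$ and preserves $\iota_G(N(x))$), which is exactly the kernel--image decomposition of your Reynolds operator $\pi_G$. Your closing remark on spanning $\Theta(G,\ell)$ over $\C$ by a real basis of the invariant subspace is the correct bookkeeping and matches what the paper's argument implicitly does.
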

\begin{proof}
From the representation theory of finite groups, we obtain
\[ {\rm Harm}_\ell (\R^4)= {\rm Harm}_\ell (\R^4)^G\oplus {\rm Span}_{\R}\{P- \varepsilon P \mid P\in {\rm Harm}_\ell (\R^4), \varepsilon\in G\}.\]
Then, the result follows from 
\begin{align*}
\theta_{G,{P-\varepsilon P}}(z)&=\sum_{x\in \mathcal{O}_G} \big( P(x)-P(\varepsilon x)\big) q^{\mathcal{Q}_G(x)}=\theta_{G,P}(z)-\sum_{x\in \varepsilon \mathcal{O}_G} P(x)q^{\mathcal{Q}_G(\varepsilon^{-1}x)}\\
&=\theta_{G,P}(z)-\sum_{x\in \mathcal{O}_G} P(x)q^{\mathcal{Q}_G(x)}=0,
\end{align*}
where we have used the fact that $G$ acts on $\mathcal{O}_G$ and that $N(\varepsilon x)=N(x)$ for $\varepsilon\in \mathbb{H}_1$ and $x\in \mathbb{H}$.
\end{proof}

The generating series of the dimension of ${\rm Harm}_\ell (\R^4)^G$ is called the \emph{harmonic Molien series of $G$}.
We denote it by
\[ \Psi_G^H(u):=\sum_{\ell\ge0}  \dim_\R {\rm Harm}_\ell (\R^4)^G u^\ell.\]
From \cite[Theorem 4.3.2]{Smith} and $ {\rm Hom}_\ell(\R^4)^G = {\rm Harm}_\ell (\R^4)^G\oplus  N(x)  {\rm Hom}_{\ell-2}(\R^4)^G$, we obtain
\[  \Psi_G^H(u)= \frac{1}{|G|} \sum_{\varepsilon \in G} \frac{1-u^2}{\det (I-uM_\varepsilon)}.\]
Using this, we can compute $\Psi_G^H(u)=\sum_{\ell\ge0}d_{G,\ell}u^\ell$ for $G\in \{2T,2O,2I\}$ as follows:
\begin{align*}
&\Psi_{2T}^H(u) = \frac{f_{2T}(u)-f_{2T}(u^{-1})u^{18}}{(1-u^4)^2(1-u^6)^2},\\
&\Psi_{2O}^H(u) = \frac{f_{2O}(u) - f_{2O}(u^{-1})u^{26}}{(1-u^6)^2(1-u^8)^2},\\ 
&\Psi_{2I}^H(u) = \frac{f_{2I}(u)-f_{2I}(u^{-1})u^{34}}{(1+u^2)^2(1-u^6)^2(1-u^{10})^2},
\end{align*}
where we set
\begin{align*}
f_{2T}(u)&=1 - 2 u^4 + 5 u^6 + 10 u^8,\\
f_{2O}(u)&=1 - 2 u^6 + 7 u^8 + 14 u^{12},\\
f_{2I}(u)&=1 + 2 u^2 +u^4 - 2 u^6 - 4 u^8 - 4 u^{10} + 10 u^{12} + 26 u^{14} +  18 u^{16}. 
\end{align*}

\begin{center}
\begin{tabular}{c|cccccccccccccccccccc}
$\ell$ & 2&4 &6 &8&10&12&14&16&18&20&22&24 \\ \hline
$d_{2T,\ell}$  &0 & 0 & 7 & 9 & 0 & 26 & 15 & 17 & 38 & 42 & 23 & 75\\ \hline
$d_{2O,\ell}$ &  0 & 0 & 0 & 9 & 0 & 13 & 0 & 17 & 19 & 21 & 0 & 50 \\ \hline
$d_{2I,\ell}$ &  0 & 0 & 0 & 0 & 0 & 13 & 0 & 0 & 0 & 21 & 0 & 25\\ \hline
\end{tabular}
\end{center}

We believe that the inequality in Proposition \ref{prop:upper_bound} becomes an equality only when $\dim_\C \Theta(G,\ell)=0$.
For the (conjectural) dimension of the space $\Theta(G,\ell) $, see the subsequent subsections.

\subsubsection{Case $G=2T$}
Let $\Lambda$ be the $D_4$ lattice generated by the $D_4$ root system.
This lattice is unique as an even integral lattice of level 2 (see e.g., \cite[Theorem 7.2]{HiraoNozakiTasaka}).
From Remark \ref{rem:D_4}, we see that $\Lambda=(1+i)\mathcal{O}_{2I}$.
Thus, the space $\Theta(2T,\ell)$ coincides with the $\C$-vector space spanned by the spherical theta function of the $D_4$ lattice.
It is generated by newforms of weight $2+\ell$ for $\Gamma_0(2)$ (see \cite[Remark 7.3]{HiraoNozakiTasaka}), namely, we have $\Theta(2T,\ell)=S_{2+\ell}^{\rm new}(2)$ the $\C$-vector space spanned by newforms of weight $2+\ell$ for $\Gamma_0(2)$.
Let $S_k(N)$ denote the $\C$-vector space spanned by cusp forms with real Fourier coefficients of weight $k$ for $\Gamma_0(N)$.
It is known that $S_k(1)$ and $S_k(2)$ have a basis whose Fourier coefficients are rational numbers.
Thus, from the dimension formula (cf.~\cite[Section 3.5]{DS}), we have $\sum_{k\ge0}\dim_\C S_k(1)u^k = \frac{u^{12}}{(1-u^4)(1-u^6)}$ and $\sum_{k\ge0}\dim_\C S_k(2)u^k = \frac{u^{8}}{(1-u^2)(1-u^4)}$.
Since $\dim_\C S_k^{\rm new}(2)= \dim_\C S_k(2)-2\dim_\C S_k(1)$, we obtain the dimension formula for $\Theta(2T,\ell)$ as follows:
\[ \sum_{\ell\ge0} \dim_\C \Theta(2T,\ell) u^\ell =\frac{1+u^{12}}{(1-u^6)(1-u^8)}.\] 

\subsubsection{Case $G=2O$}
From Theorems \ref{thm:harmonic_strength} and \ref{thm:dim=0}, we have $\dim_\C \Theta(2O,2\ell)=0$ if and only if $2\ell\in\{22,14,10,6,4,2\}$.
The first non-trivial space $\Theta(2O,8)$ is a subspace of cusp forms of weight 12 for $\Gamma_0(2)$.
One can observe that $\Theta(2O,8)$ is 1-dimensional spanned by
\[ \Delta(z)+64 \Delta(2z) = q+40q^2+252q^3-3008q^4+4830q^5+\cdots,\]
where $\Delta(z)=q\prod_{n\ge1}(1-q^n)^{24}$ is the unique cusp form of weight 12 for ${\rm SL}_2(\Z)$.
Moreover, we have
\begin{align*}
\Theta(2O,12)&=\C(q-3368q^2+58092q^3-268736q^4+\cdots),\\
\Theta(2O,16)&=\C(q+39880q^2+1279452q^3-1999808q^4-174409410q^5+\cdots),\\ 
\Theta(2O,18)&=\C(q-79024q^2+5687604q^3+103384576q^4-1438933770q^5+\cdots),\\
\Theta(2O,20)&=\C(q+200632q^2+57854412q^3-580869056q^4 + 12385840110 q^5- 70670680416 q^6+\cdots).
\end{align*}

We may expect that
\[ \sum_{\ell\ge0} \dim_\C \Theta(2O,\ell) u^\ell \stackrel{?}{=} 1+ \frac{u^8+u^{12}-u^{14}}{(1-u^6)(1-u^8)}=\frac{1+u^{18}}{(1-u^8)(1-u^{12})}.\]

\subsubsection{Case $G=2I$}
Theorems \ref{thm:harmonic_strength} and \ref{thm:dim=0} imply that $\dim_\C \Theta(2I,2\ell)=0$ if and only if 
\[2\ell\in\{58,46,38,34,28,26,22,18,16,14,10,8,6,4,2\}.\]
The first nontrivial space $\Theta(2I,12) $ is 1-dimensional, generated by $ E_4(z)\Delta(z)$.
One can observe that
\begin{align*}
\Theta(2I,20) &=\C(41q + 719736q^2 - 26525268q^3 + 1272600128q^4+\cdots),\\
\Theta(2I,24) &=\C(23q + 2267928q^2 + 438932196q^3 +\cdots),\\
\Theta(2I,30) &=\C(51q + 107369712q^2 - 33500700684q^3 +\cdots),\\
\Theta(2I,32)&=\C(2207q + 11194301112q^2 + 953165204964q^3 +\cdots), \\
\Theta(2I,36)&=\C(107q + 3571780392q^2 - 1430566003836q^3 + \cdots).
\end{align*}

We may expect that
\[ \sum_{\ell\ge0} \dim_\C \Theta(2I,\ell) u^\ell \stackrel{?}{=}\frac{1+u^{30}}{(1-u^{12})(1-u^{20})}.\]

\section*{Acknowledgments}
This work is partially supported by
JSPS KAKENHI Grant Number 19K03445, 20K03736, 23K03034, 24K06688 and 24K06871.

\section*{Statements and Declarations}
We wish to confirm that there are no known conflicts of interest associated with this publication and there has been no significant financial support for this work that could have influenced its outcome.

The data that support the findings of this study are available from the corresponding author, K.T. upon reasonable request.

\appendix
\section{Errata for our previous paper}
On this occasion, we correct the (typographical) errors in our previous paper \cite{HiraoNozakiTasaka}. We would like to express our gratitude to Professor Akihiro Munemasa for pointing them out.

\begin{itemize}
\item On page 11, in lines 1 and 4, ``$y \in C_8$" should be corrected to ``$y \in \big(\sigma'\big)^{-1}\left(X_{\frac12}\right)$".
Additionally, in line 4 of the same page, ``This implies that $C_8$" should be corrected to ``This implies that $\big(\sigma'\big)^{-1}\left(X_{\frac12}\right)$."
\item On page 14, in the proof of Theorem 7.2, ``Theorem 3.2" in line 7 should be corrected to ``Theorem 3.3."
\item On page 15, just before Theorem 8.1, the sentence
``For a finite set $X\subset \mathbb{S}^{d-1}$, we say that $T\subset \N$ is the harmonic strength of $X$ if $X$ is not a spherical $T'$-design for any $T\subsetneq T'\subset \N$."
should be corrected to
``For a spherical $T$-design $X\subset \mathbb{S}^{d-1}$, we say that $T\subset \N$ is the harmonic strength of $X$ if $X$ is not a spherical $T'$-design for any $T\subsetneq T'\subset \N$."
\item On page 16, the congruence $\frac13 \sum_{\boldsymbol{x}\in (D_4)_{2p}} P_6(\boldsymbol{x}) \equiv 32 p^2(1+p) \bmod 3$ in the proof of Theorem 8.2 needs to be explained carefully:
Let $N$ be a subgroup of $W({\bf F}_4)$ given in the proof of Theorem 6.1 in page 12 and $U_p$ a subset of $ (D_4)_{2p}$ such that $ (D_4)_{2p}=\bigsqcup_{\boldsymbol{x}\in U_p} \boldsymbol{x}^N$. Since $P_6$ is $W({\bf F}_4)$-invariant, we obtain
\[\frac{1}{3} \sum_{\boldsymbol{x} \in (D_4)_{2p}} P_6 (\boldsymbol{x})
=\frac{1}{3} 24 \sum_{\boldsymbol{x} \in U_p} P_6 (\boldsymbol{x})
=8 \sum_{\boldsymbol{x} \in U_p} P_6 (\boldsymbol{x})
\equiv  8 (1+p) (2 p)^2 =32 p^2(1+p) \bmod 3.\]
\end{itemize}


\end{document}